\newtheoremstyle{conv}{}{}{\upshape}{}{\itshape}{}{ }{}
\newtheoremstyle{note}{}{}{\itshape}{}{\itshape}{}{ }{}
\def\thmhead#1#2#3{%
	\thmnumber{\textup{\mdseries(#2)}}%
	\thmname{\@ifnotempty{#2}{~}#1}%
	\thmnote{{\the\thm@notefont(#3)}}}
\newtheorem{thm}[equation]{Theorem}
\newtheorem{prop}[equation]{Proposition}
\theoremstyle{note}
\newtheorem{note}[equation]{}
\theoremstyle{conv}
\newtheorem{conv}[equation]{}
\theoremstyle{definition}
\theoremstyle{remark}
\newtheorem{rem}[equation]{Remark}
\newtheorem*{ack}{Acknowledgments}
\newenvironment{axioms}[1]
{\begin{list}{(#1\arabic{enumi})}{
\usecounter{enumi}%
\def\makelabel##1{\hspace\labelsep \upshape##1}%
\setlength{\labelwidth}{\leftmargin}%
\advance \labelwidth-\labelsep}}
{\end{list}}
\renewenvironment{description}
{\begin{list}{}{%
\renewcommand{\makelabel}[1]{\hspace\labelsep \upshape ##1}%
\setlength{\labelwidth}{\leftmargin}%
\advance \labelwidth-\labelsep}}
{\end{list}}
\numberwithin{equation}{section}
\newcommand{\hypergeometricseries}[5]{{}_{#1}F_{#2}\!\left(\left.\!\!\!\begin{array}{c} #3 \\ #4 \end{array}\!\!\right| #5 \right)}
\begin{document}

\title{A bilinear form relating two Leonard systems}
\author{Hajime Tanaka}
\address{Graduate School of Information Sciences, Tohoku University, Sendai 980-8579, Japan}
\email{htanaka@math.is.tohoku.ac.jp}
\dedicatory{Dedicated to Professor Tatsuro Ito on the occasion of his $60$th birthday}
\date{}
\begin{abstract}
Let $\Phi$, $\Phi'$ be Leonard systems over a field $\mathbb{K}$, and $V$, $V'$ the vector spaces underlying $\Phi$, $\Phi'$, respectively.
In this paper, we introduce and discuss a \emph{balanced bilinear form} on $V\times V'$.
Such a form naturally arises in the study of $Q$-polynomial distance-regular graphs.
We characterize a balanced bilinear form from several points of view.
\end{abstract}
\subjclass[2000]{05E35; 05E30; 33C45; 33D45}
\keywords{Leonard system; Distance-regular graph; Askey scheme; $q$-Racah polynomial}

\maketitle

\section{Introduction}\label{sec: introduction}

Leonard systems naturally arise in representation theory, combinatorics, and the theory of orthogonal polynomials (see e.g. \cite{Terwilliger2003JCAM,Terwilliger2006N}).
Hence they are receiving considerable attention.
Indeed, the use of the name `Leonard system' is motivated by a connection to a theorem of Leonard \cite{Leonard1982SIAM}, \cite[p.~260]{BI1984B}, which involves the $q$-Racah polynomials \cite{AW1979SIAM} and some related polynomials of the Askey scheme \cite{KS1998R}.

Let $\Phi$, $\Phi'$ be Leonard systems over a field $\mathbb{K}$, and $V$, $V'$ the vector spaces underlying $\Phi$, $\Phi'$, respectively (\S \ref{sec: Leonard systems}).
Suppose $\dim V'\leqslant\dim V$.
We consider a situation where $\Phi$, $\Phi'$ are related by means of a bilinear form $(\cdot|\cdot):V\times V'\rightarrow\mathbb{K}$ satisfying certain orthogonality conditions (see \S \ref{sec: balanced bilinear form} for the precise definition).
In this case we say that $(\cdot|\cdot)$ is \emph{balanced} with respect to $\Phi$, $\Phi'$, and call $\Phi'$ a \emph{descendent} of $\Phi$.
The notion of a balanced bilinear form originates in the theory of $Q$-\emph{polynomial distance-regular graphs} \cite{BI1984B,BCN1989B,Godsil1993B}.
Specifically, such a form arises in the context of subsets having minimal \emph{width} and \emph{dual width} \cite{BGKM2003JCTA,Tanaka2006JCTA,HS2007EJC} and in the context of certain irreducible modules for the \emph{Terwilliger algebra} \cite{Terwilliger1992JAC,Terwilliger1993JACa,Terwilliger1993JACb}.
For example, let $V$ be the primary module of the hypercube $Q_d$ with respect to a base vertex $x$ (where $\mathbb{K}=\mathbb{R}$, say).
For the former context, let $V'$ be the primary module of an induced subgraph $Q_{d'}$ $(d'\leqslant d)$ containing $x$.
For the latter, let $V'$ be an irreducible module with respect to another base vertex $y$, and suppose that $V'$ has endpoint $\partial(x,y)$ and is not orthogonal to $V$.
In each case, let $\Phi$, $\Phi'$ be the Leonard systems associated with $V$, $V'$ (cf. \cite{Terwilliger1993JACa}, \cite[Example 1.4]{ITT2001P}), and for the latter we further replace $\Phi$, $\Phi'$ by their `duals'.
Then the restriction of the standard inner product onto $V\times V'$ turns out to be balanced with respect to $\Phi$, $\Phi'$.
See \cite{Tanaka2008pre} for details.
We believe that the study of a balanced bilinear form will lead to a unification of these two approaches at a certain level and thus help better understand the structure of $Q$-polynomial distance-regular graphs.

The contents of the paper are as follows.
\S \ref{sec: Leonard systems} reviews basic terminology, notation  and facts concerning Leonard systems.
In \S \ref{sec: balanced bilinear form} we introduce a balanced bilinear form as well as a descendent.
\S\S \ref{sec: properties of bilinear form} and \ref{sec: reconstruction of bilinear form} are devoted to its properties and a characterization in terms of the \emph{parameter arrays} of $\Phi$, $\Phi'$ (Theorem \eqref{characterization of bilinear form in terms of parameter arrays}).
It should be remarked that the isomorphism class of a Leonard system is determined by its parameter array (\cite[Theorem 1.9]{Terwilliger2001LAA}).
\S \ref{sec: bilinear form in parametric form} establishes a classification of the descendents of Leonard systems (Theorem \eqref{characterization of bilinear form in parametric form}).
\S \ref{sec: Phi' in terms of bilinear form} deals with a `converse' problem:
given the Leonard system $\Phi$ and a bilinear form $(\cdot|\cdot): V\times V'\rightarrow\mathbb{K}$, we ask whether there is a descendent $\Phi'$ defined on $V'$ so that $(\cdot|\cdot)$ is balanced with respect to $\Phi$, $\Phi'$.
Theorem \eqref{characterization of Phi' in terms of bilinear form} is the main result on this topic.
\S \ref{sec: polynomials} discusses an interpretation of a balanced bilinear form as an orthogonality of some polynomials of the Askey scheme.
The paper ends with an appendix containing a list of the parameter arrays of the Leonard systems.
We shall apply these results to the study of $Q$-polynomial distance-regular graphs in future papers.

\section{Leonard systems}\label{sec: Leonard systems}

Let $\mathbb{K}$ be a field, $d$ a \emph{positive} integer, $\mathscr{A}$ a $\mathbb{K}$-algebra isomorphic to the full matrix algebra $\mathrm{Mat}_{d+1}(\mathbb{K})$, and $V$ an irreducible left $\mathscr{A}$-module.
We remark that $V$ is unique up to isomorphism, and that $V$ has dimension $d+1$.
An element $A$ of $\mathscr{A}$ is said to be \emph{multiplicity-free} if it has $d+1$ mutually distinct eigenvalues in $\mathbb{K}$.
Let $A$ be a multiplicity-free element of $\mathscr{A}$ and $\{\theta_i\}_{i=0}^d$ an ordering of the eigenvalues of $A$.
Then by elementary linear algebra there is a sequence of elements $\{E_i\}_{i=0}^d$ in $\mathscr{A}$ such that (i) $AE_i=\theta_iE_i$; (ii) $E_iE_j=\delta_{ij}E_i$; (iii) $\sum_{i=0}^dE_i=I$ where $I$ is the identity of $\mathscr{A}$.
We call $E_i$ the \emph{primitive idempotent} of $A$ associated with $\theta_i$.

A \emph{Leonard system} in $\mathscr{A}$ (\cite[Definition 1.4]{Terwilliger2001LAA}) is a sequence
\begin{equation}\label{Leonard system}
	\Phi=\left(A;A^*;\{E_i\}_{i=0}^d;\{E_i^*\}_{i=0}^d\right)
\end{equation}
satisfying the following axioms (LS1)--(LS5):
\begin{axioms}{LS}
\item Each of $A,A^*$ is a multiplicity-free element in $\mathscr{A}$.
\item $\{E_i\}_{i=0}^d$ is an ordering of the primitive idempotents of $A$.
\item $\{E_i^*\}_{i=0}^d$ is an ordering of the primitive idempotents of $A^*$.
\item $E_i^*AE_j^*=\begin{cases} 0 & \text{if } |i-j|>1 \\ \ne 0 & \text{if } |i-j|=1 \end{cases} \quad (0\leqslant i,j\leqslant d)$.
\item $E_iA^*E_j=\begin{cases} 0 & \text{if } |i-j|>1 \\ \ne 0 & \text{if } |i-j|=1 \end{cases} \quad (0\leqslant i,j\leqslant d)$.
\end{axioms}
We call $d$ the \emph{diameter} of $\Phi$, and say that $\Phi$ is \emph{over} $\mathbb{K}$.
For notational convenience, we define $E_i=E_i^*=0$ if $i<0$ or $i>d$.
We refer the reader to \cite{Terwilliger1992JAC,Terwilliger2001LAA,Terwilliger2004LAA,Terwilliger2005DCC,Terwilliger2006N} for background on Leonard systems.

A Leonard system $\Psi$ in a $\mathbb{K}$-algebra $\mathscr{B}$ is \emph{isomorphic} to $\Phi$ if there is a $\mathbb{K}$-algebra isomorphism $\gamma:\mathscr{A}\rightarrow\mathscr{B}$ such that $\Psi=\Phi^{\gamma}:=\left(A^{\gamma};A^{*\gamma};\{E_i^{\gamma}\}_{i=0}^d;\{E_i^{*\gamma}\}_{i=0}^d\right)$.
Let $\xi,\xi^*,\zeta,\zeta^*$ be scalars in $\mathbb{K}$ such that $\xi\ne 0$, $\xi^*\ne 0$.
Then
\begin{equation}
	\left(\xi A+\zeta I;\xi^*A^*+\zeta^*I;\{E_i\}_{i=0}^d;\{E_i^*\}_{i=0}^d\right)
\end{equation}
is a Leonard system in $\mathscr{A}$, called an \emph{affine transformation} of $\Phi$.
We say that $\Phi$, $\Psi$ are \emph{affine-isomorphic} if $\Psi$ is isomorphic to an affine transformation of $\Phi$.
Also
\begin{align}
	\Phi^*&=\left(A^*;A;\{E_i^*\}_{i=0}^d;\{E_i\}_{i=0}^d\right), \\
	\Phi^{\downarrow}&=\left(A;A^*;\{E_i\}_{i=0}^d;\{E_{d-i}^*\}_{i=0}^d\right), \\
	\Phi^{\Downarrow}&=\left(A;A^*;\{E_{d-i}\}_{i=0}^d;\{E_i^*\}_{i=0}^d\right)
\end{align}
are Leonard systems in $\mathscr{A}$.
Viewing $*,\downarrow,\Downarrow$ as permutations on all Leonard systems,
\begin{equation*}
	*^2=\downarrow^2=\Downarrow^2=1, \quad \Downarrow *=*\downarrow, \quad \downarrow *=*\Downarrow, \quad \downarrow\Downarrow=\Downarrow\downarrow.
\end{equation*}
The group generated by the symbols $*,\downarrow,\Downarrow$ subject to the above relations is the dihedral group $D_4$ with $8$ elements.
For the rest of this paper we shall use the following notational convention:

\begin{conv}
For any $g\in D_4$ and for any object $f$ associated with $\Phi$, we let $f^g$ denote the corresponding object for $\Phi^{g^{-1}}$; an example is $E_i^*(\Phi)=E_i(\Phi^*)$.
\end{conv}

For $0\leqslant i\leqslant d$ let $\theta_i$ (resp. $\theta_i^*$) be the eigenvalue of $A$ (resp. $A^*$) associated with $E_i$ (resp. $E_i^*$).
By \cite[Theorem 3.2]{Terwilliger2001LAA} there are scalars $\varphi_i$ $(1\leqslant i\leqslant d)$ in $\mathbb{K}$ and a $\mathbb{K}$-algebra isomorphism $\natural:\mathscr{A}\rightarrow\mathrm{Mat}_{d+1}(\mathbb{K})$ such that
\begin{equation*}
	A^{\natural}=\begin{pmatrix} \theta_0 &&&&& \bm{0} \\ 1 & \theta_1 \\ & 1 & \theta_2 \\ && \cdot & \cdot \\ &&& \cdot & \cdot \\ \bm{0} &&&& 1 & \theta_d \end{pmatrix}, \quad A^{*\natural}=\begin{pmatrix} \theta_0^* & \varphi_1 &&&& \bm{0} \\ & \theta_1^* & \varphi_2 \\ && \theta_2^* & \cdot \\ &&& \cdot & \cdot \\ &&&& \cdot & \varphi_d \\ \bm{0} &&&&& \theta_d^* \end{pmatrix}.
\end{equation*}
We define $\phi_i=\varphi_i^{\Downarrow}$ $(1\leqslant i\leqslant d)$.
The \emph{parameter array} of $\Phi$ is the sequence
\begin{equation}
	p(\Phi)=\left(\{\theta_i\}_{i=0}^d;\{\theta_i^*\}_{i=0}^d;\{\varphi_i\}_{i=1}^d;\{\phi_i\}_{i=1}^d\right).
\end{equation}
Terwilliger \cite[Theorem 1.9]{Terwilliger2001LAA} showed that the isomorphism class of $\Phi$ is determined by $p(\Phi)$, and that the set of parameter arrays of Leonard systems over $\mathbb{K}$ with diameter $d$ is characterized by the following properties \eqref{PA1}--\eqref{PA5}:
\begin{gather}
	\varphi_i\ne 0, \quad \phi_i\ne 0 \quad (1\leqslant i\leqslant d). \label{PA1} \\
	\theta_i\ne\theta_j, \quad \theta_i^*\ne\theta_j^* \quad \text{\emph{if}} \ i\ne j \quad (0\leqslant i,j\leqslant d). \label{PA2} \\
	\varphi_i=\phi_1\sum_{\ell=0}^{i-1}\frac{\theta_{\ell}-\theta_{d-\ell}}{\theta_0-\theta_d}+(\theta_i^*-\theta_0^*)(\theta_{i-1}-\theta_d) \quad (1\leqslant i\leqslant d). \label{PA3} \\
	\displaystyle\phi_i=\varphi_1\sum_{\ell=0}^{i-1}\frac{\theta_{\ell}-\theta_{d-\ell}}{\theta_0-\theta_d}+(\theta_i^*-\theta_0^*)(\theta_{d-i+1}-\theta_0) \quad (1\leqslant i\leqslant d). \label{PA4} \\
	\frac{\theta_{i-2}-\theta_{i+1}}{\theta_{i-1}-\theta_i}, \frac{\theta_{i-2}^*-\theta_{i+1}^*}{\theta_{i-1}^*-\theta_i^*} \ \text{\emph{are equal and independent of}} \ i \ (2\leqslant i\leqslant d-1). \label{PA5}
\end{gather}

It is known \cite[Theorem 6.1]{Terwilliger2004LAA} that there is a unique antiautomorphism $\dag$ of $\mathscr{A}$ such that $A^{\dag}=A$ and $A^{*\dag}=A^*$.
For the rest of this paper let $\langle\cdot,\cdot\rangle:V\times V\rightarrow\mathbb{K}$ denote a nondegenerate bilinear form on $V$ such that (\cite[\S 15]{Terwilliger2004LAA})
\begin{equation}
	\langle Xu,v\rangle=\langle u,X^{\dag}v\rangle \quad (u,v\in V,\ X\in\mathscr{A}).
\end{equation}
We shall write
\begin{equation}
	||u||^2=\langle u,u\rangle \quad (u\in V).
\end{equation}
For convenience, let $\mathscr{D}$ be the subalgebra of $\mathscr{A}$ generated by $A$.
Observe that
\begin{equation}
	\mathscr{D}=\mathrm{span}\{I,A,A^2,\dots,A^d\}=\mathrm{span}\{E_0,E_1,\dots,E_d\}.
\end{equation}
Obviously $\dagger$ fixes each element of $\mathscr{D}\cup\mathscr{D}^*$, so that we have
\begin{equation}
	\langle Xu,v\rangle=\langle u,Xv\rangle \quad (u,v\in V,\ X\in\mathscr{D}\cup\mathscr{D}^*),
\end{equation}
from which it follows that
\begin{equation}
	\langle E_iV,E_jV\rangle=\langle E_i^*V,E_j^*V\rangle=0 \quad \text{\emph{if}}\ i\ne j \quad (0\leqslant i,j\leqslant d).
\end{equation}
From now on let $u$ be a nonzero vector in $E_0V$.
Then $E_i^*u\ne 0$ for $0\leqslant i\leqslant d$ (\cite[Lemma 10.2]{Terwilliger2004LAA}), so that $\{E_i^*u\}_{i=0}^d$ is a basis of $V$.
\noindent
Let
\begin{equation}
	\nu=\mathrm{trace}(E_0^*E_0)^{-1}, \quad k_i=\mathrm{trace}(E_i^*E_0)\nu \quad (0\leqslant i\leqslant d).
\end{equation}
Indeed, by \cite[Lemma 9.2]{Terwilliger2004LAA} $\mathrm{trace}(E_i^*E_0)\ne 0$ $(0\leqslant i\leqslant d)$.
With this notation we have (\cite[Theorem 15.3]{Terwilliger2004LAA})
\begin{equation}\label{<Ei*u,Ej*u>}
	\langle E_i^*u,E_j^*u\rangle=\delta_{ij}k_i\nu^{-1}||u||^2 \quad (0\leqslant i,j\leqslant d).
\end{equation}

The \emph{dual switching element} for $\Phi$ (\cite[Note 5.2]{NT2008LAAa}) is
\begin{equation}
	S^*=\sum_{\ell=0}^d\frac{\phi_1\phi_2\dots\phi_{\ell}}{\varphi_1\varphi_2\dots\varphi_{\ell}}E_{\ell}^*.
\end{equation}
It follows that
\begin{equation}\label{switching}
	S^*E_0V=E_dV,
\end{equation}
and moreover that any element $X\in\mathscr{D}^*$ satisfying $XE_0V\subseteq E_dV$ is a scalar multiple of $S^*$ (\cite[Theorem 6.7]{NT2008LAAa}).

Finally, we remark that
\begin{equation}\label{ith component of 0*}
	E_0^*V+E_1^*V+\dots+E_i^*V=E_0^*V+AE_0^*V+\dots+A^iE_0^*V \quad (0\leqslant i\leqslant d). 
\end{equation}
In particular:
\begin{equation}\label{V=DE0*V}
	V=\mathscr{D}E_0^*V.
\end{equation}

\section{A balanced bilinear form}\label{sec: balanced bilinear form}

For the rest of this paper, we shall retain the notation of the previous section.
Except in \S \ref{sec: Phi' in terms of bilinear form} we shall always refer to the following set-up:

\begin{conv}\label{Phi, Phi'}
Let $\Phi'=\bigl(A';A^{*\prime};\{E_i'\}_{i=0}^{d'};\{E_i^{*\prime}\}_{i=0}^{d'}\bigr)$ be a Leonard system over $\mathbb{K}$ with diameter $d'\leqslant d$.
For any object $f$ associated with $\Phi$, we let $f'$ denote the corresponding object for  $\Phi'$; an example is $V'=V(\Phi')$.
\end{conv}

A nonzero bilinear form $(\cdot|\cdot): V\times V'\rightarrow\mathbb{K}$ is \emph{balanced} with respect to $\Phi$, $\Phi'$ if (B1), (B2) hold below:
\begin{axioms}{B}
\item There is an integer $\rho$ ($0\leqslant\rho\leqslant d-d'$) such that $(E_i^*V|E_j^{*\prime}V')=0$ if $i-\rho\ne j$ \ $(0\leqslant i\leqslant d, \ 0\leqslant j\leqslant d')$.
\item $(E_iV|E_j'V')=0$ if $i<j$ or $i>j+d-d'$ \ $(0\leqslant i\leqslant d, \ 0\leqslant j\leqslant d')$.
\end{axioms}
We call $\rho$ the \emph{endpoint} of $(\cdot|\cdot)$ (with respect to $\Phi$, $\Phi'$), and refer to the form also as $\rho$-\emph{balanced}.
We say that $\Phi'$ is a $\rho$-\emph{descendent} (or simply a \emph{descendent}) of $\Phi$ whenever such a form exists.

\begin{rem}\label{balanced with respect to Leonard pairs}
A bilinear form $(\cdot|\cdot): V\times V'\rightarrow\mathbb{K}$ which is balanced with respect to $\Phi$, $\Phi'$ is in fact balanced with respect to the following pairs of Leonard systems:
\begin{center}
\begin{tabular}{c|c}
pair & endpoint \\
\hline
$\Phi,\Phi'$ & $\rho$ \\
$\Phi^{\downarrow},\Phi^{\prime\downarrow}$ & $d-d'-\rho$ \\
$\Phi^{\Downarrow},\Phi^{\prime\Downarrow}$ & $\rho$ \\
$\Phi^{\downarrow\Downarrow},\Phi^{\prime\downarrow\Downarrow}$ & $d-d'-\rho$
\end{tabular}
\end{center}
\end{rem}

\begin{rem}\label{transitivity}
If $\Phi'$ is a $\rho$-descendent of $\Phi$, then any $\rho'$-descendent $\Phi''$ of $\Phi'$ is a $(\rho+\rho')$-descendent of $\Phi$.
Indeed, let $(\cdot|\cdot): V\times V'\rightarrow\mathbb{K}$ and $(\cdot|\cdot)': V'\times V''\rightarrow\mathbb{K}$ be corresponding balanced bilinear forms, where $V''=V(\Phi'')$.
Let $\mathrm{proj}'':V''\rightarrow V'$ be a unique linear map such that $(v'|v'')'=\langle v',\mathrm{proj}''v''\rangle'$ for all $v'\in V', v''\in V''$.
Then the bilinear form $V\times V''\rightarrow\mathbb{K}$ defined by $(v,v'')\mapsto (v|\mathrm{proj}''v'')$ $(v\in V, v''\in V'')$ is $(\rho+\rho')$-balanced with respect to $\Phi$, $\Phi''$.
(That this form is nonzero follows e.g. from \eqref{values of products} below.)
\end{rem}

\begin{rem}
If $\Phi'$ is a descendent of $\Phi$, then any Leonard system affine isomorphic to $\Phi'$ is a descendent of any Leonard system affine isomorphic to $\Phi$.
Later we shall show that a balanced bilinear form has full-rank (cf. \eqref{values of products}), from which it follows that two Leonard systems are descendents of each other if and only if they are affine isomorphic (cf. \eqref{characterization of Phi' in terms of bilinear form}).
Now, let $[\Phi]$, $[\Phi']$ denote the affine-isomorphism classes of $\Phi$, $\Phi'$, respectively, and write $[\Phi']\preccurlyeq [\Phi]$ if $\Phi'$ is a descendent of $\Phi$.
Then, in view of \eqref{transitivity} and the above comments, $\preccurlyeq$ defines a well-defined poset structure on the set of affine-isomorphism classes of Leonard systems over $\mathbb{K}$.
\end{rem}

\section{Properties of a balanced bilinear form}\label{sec: properties of bilinear form}

In this section,  we shall study the basic properties of a balanced bilinear form.
With reference to \eqref{Phi, Phi'}, we shall assume that there is a bilinear form $(\cdot|\cdot): V\times V'\rightarrow\mathbb{K}$ which is $\rho$-balanced with respect to $\Phi$, $\Phi'$ ($0\leqslant\rho\leqslant d-d'$).

We define a $\mathbb{K}$-algebra homomorphism $\sigma:\mathscr{D}^*\rightarrow\mathscr{D}^{*\prime}$ by
\begin{equation}
	E_i^{*\sigma}=E_{i-\rho}^{*\prime} \quad (0\leqslant i\leqslant d).
\end{equation}
Clearly $\sigma$ is surjective.
By (B1) it follows that
\begin{equation}\label{sigma commutes with product}
	(Xv|v')=(v|X^{\sigma}v') \quad (v\in V,\ v'\in V',\ X\in\mathscr{D}^*).
\end{equation}
Let $\mathrm{proj}:V\rightarrow V'$ and $\mathrm{proj}':V'\rightarrow V$ be unique linear maps satisfying
\begin{equation}
	(v|v')=\langle \mathrm{proj}\,v,v'\rangle'=\langle v,\mathrm{proj}'v'\rangle \quad (v\in V,\ v'\in V').
\end{equation}
It follows from (B2) that $\mathrm{proj}\,E_0V\subseteq E_0'V'$.
Furthermore, by \eqref{V=DE0*V} we have
\begin{equation*}
	(E_0V|V')=(E_0V|\mathscr{D}^{*\sigma}V')=(\mathscr{D}^*E_0V|V')=(V|V')\ne 0,
\end{equation*}
so that $\mathrm{proj}\,E_0V\ne 0$.
Hence
\begin{equation}\label{projE0V=E0'V'}
	\mathrm{proj}\,E_0V=E_0'V'.
\end{equation}
With this explained, we have

\begin{prop}\label{values of products}
Let $u$ (resp. $u'$) be a nonzero vector in $E_0V$ (resp. $E_0'V'$).
Then there is a nonzero scalar $\epsilon\in\mathbb{K}$ such that
\begin{equation*}
	(E_i^*u|E_j^{*\prime}u')=\epsilon\delta_{i-\rho,j}k_j'\nu^{\prime-1}||u'||^{\prime 2} \quad (0\leqslant i\leqslant d, \ 0\leqslant j\leqslant d').
\end{equation*}
In particular, $(\cdot|\cdot)$ has full-rank, i.e., $\mathrm{proj}': V'\rightarrow V$ is injective.
\end{prop}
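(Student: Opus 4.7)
The plan is to compute the pairing $(E_i^*u\,|\,E_j^{*\prime}u')$ directly by transferring $E_i^*$ across the form via \eqref{sigma commutes with product} and then evaluating the result against the one-dimensional subspace $E_0'V'$. Applying \eqref{sigma commutes with product} together with the idempotent relations in $\mathscr{D}^{*\prime}$ gives
\begin{equation*}
(E_i^*u\,|\,E_j^{*\prime}u') = (u\,|\,E_{i-\rho}^{*\prime}E_j^{*\prime}u') = \delta_{i-\rho,j}\,(u\,|\,E_j^{*\prime}u'),
\end{equation*}
where the convention $E_\ell^{*\prime}=0$ for $\ell\notin\{0,1,\dots,d'\}$ keeps the identity valid for all $0\leqslant i\leqslant d$. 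This already yields the Kronecker factor, so the problem reduces to identifying $(u\,|\,E_j^{*\prime}u')$.

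By \eqref{projE0V=E0'V'} we have $\mathrm{proj}\,u\in E_0'V'$, and since $E_0'$ is a primitive idempotent the space $E_0'V'$ is one-dimensional; hence $\mathrm{proj}\,u=\epsilon\,u'$ for a unique scalar $\epsilon\in\mathbb{K}$. Writing $u'=\sum_{\ell=0}^{d'}E_\ell^{*\prime}u'$ and applying \eqref{<Ei*u,Ej*u>} to $\Phi'$,
\begin{equation*}
(u\,|\,E_j^{*\prime}u') = \langle\mathrm{proj}\,u,\,E_j^{*\prime}u'\rangle' = \epsilon\sum_{\ell=0}^{d'}\langle E_\ell^{*\prime}u',\,E_j^{*\prime}u'\rangle' = \epsilon\,k_j'\nu^{\prime-1}||u'||^{\prime 2},
\end{equation*}
which combined with the previous display gives the claimed formula. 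Moreover $\epsilon\neq 0$: indeed $E_0V$ is one-dimensional and spanned by $u\neq 0$, while $\mathrm{proj}|_{E_0V}$ has nonzero image $E_0'V'$ by \eqref{projE0V=E0'V'}, forcing $\mathrm{proj}\,u\neq 0$.

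For the \emph{in particular} clause, suppose $v'\in\ker\mathrm{proj}'$, so that $(v\,|\,v')=0$ for all $v\in V$. Expanding $v'=\sum_{j=0}^{d'}c_j\,E_j^{*\prime}u'$ in the basis $\{E_j^{*\prime}u'\}_{j=0}^{d'}$ and substituting $v=E_{j+\rho}^*u$ into the pairing formula collapses the sum to $c_j\,\epsilon\,k_j'\nu^{\prime-1}||u'||^{\prime 2}=0$ for each $0\leqslant j\leqslant d'$; as each of $\epsilon$, $k_j'$, $\nu^{\prime-1}$, $||u'||^{\prime 2}$ is nonzero (the last by nondegeneracy of $\langle\cdot,\cdot\rangle'$ together with \eqref{<Ei*u,Ej*u>}), we obtain $c_j=0$ for all $j$, whence $v'=0$. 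The calculation is essentially mechanical; the only genuine conceptual step is identifying $\mathrm{proj}\,u$ as a scalar multiple of $u'$ via the one-dimensionality of $E_0'V'$.
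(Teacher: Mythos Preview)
Your proof is correct and follows essentially the same approach as the paper: both arguments identify $\mathrm{proj}\,u=\epsilon u'$ via \eqref{projE0V=E0'V'} and then reduce the pairing to $\langle u',E_j^{*\prime}u'\rangle'$ using \eqref{sigma commutes with product} and \eqref{<Ei*u,Ej*u>}. Your write-up is simply more explicit, in particular spelling out the injectivity of $\mathrm{proj}'$, which the paper leaves as an immediate consequence of the formula.
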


\begin{proof}
By \eqref{projE0V=E0'V'} there is a nonzero scalar $\epsilon\in\mathbb{K}$ such that $\mathrm{proj}\,u=\epsilon u'$.
Hence from \eqref{<Ei*u,Ej*u>} it follows that
\begin{equation*}
	(E_i^*u|E_j^{*\prime}u')=\epsilon\delta_{i-\rho,j}\langle u',E_j^{*\prime}u'\rangle'=\epsilon\delta_{i-\rho,j}k_j'\nu^{\prime-1}||u'||^{\prime 2}. \qedhere
\end{equation*}
\end{proof}

\begin{prop}\label{A*', S*' in terms of A*sigma, S*sigma}
The following \textup{(i)}, \textup{(ii)} hold:
\begin{enumerate}
\item There are scalars $\xi^*,\zeta^*\in\mathbb{K}$ such that $A^{*\prime}=\xi^*A^{*\sigma}+\zeta^*I'$, where $I'$ is the identity of $\mathscr{A}'$.
\item $S^{*\prime}=\dfrac{\varphi_1\varphi_2\dots\varphi_{\rho}}{\phi_1\phi_2\dots\phi_{\rho}}S^{*\sigma}$.
\end{enumerate}
\end{prop}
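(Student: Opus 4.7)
The plan is to prove both parts in parallel: each of $A^{*\sigma}$ and $S^{*\sigma}$ sends $E_0'V'$ into a prescribed subspace of $V'$ (of dimensions $2$ and $1$ respectively), and the elements of $\mathscr{D}^{*\prime}$ with that property are pinned down up to an affine (resp.~scalar) combination. The common preparatory identity is $\mathrm{proj}(Xv)=X^{\sigma}\,\mathrm{proj}(v)$ for $X \in \mathscr{D}^{*}$ and $v \in V$; this follows from \eqref{sigma commutes with product} by rewriting $(Xv|v')=\langle\mathrm{proj}(Xv),v'\rangle'$ and $(v|X^{\sigma}v')=\langle X^{\sigma}\mathrm{proj}(v),v'\rangle'$ (using that $X^{\sigma}$ is self-adjoint for $\langle\cdot,\cdot\rangle'$), then invoking nondegeneracy of $\langle\cdot,\cdot\rangle'$. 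Combined with \eqref{projE0V=E0'V'}, this yields $A^{*\sigma}E_0'V'=\mathrm{proj}(A^*E_0V)$ and $S^{*\sigma}E_0'V'=\mathrm{proj}(S^*E_0V)$.

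For (i), axiom (LS5) for $\Phi$ gives $A^*E_0V \subseteq E_0V+E_1V$, and a direct inspection of (B2) for $i \in \{0,1\}$ (which forces $(E_iV|E_j'V')=0$ for $j \geq 2$) yields $\mathrm{proj}(E_0V+E_1V) \subseteq E_0'V'+E_1'V'$. Combining, $A^{*\sigma}E_0'V' \subseteq E_0'V'+E_1'V'$. Now set
\[
Z := \{X \in \mathscr{D}^{*\prime} : XE_0'V' \subseteq E_0'V'+E_1'V'\}.
\]
Since $\{E_k^{*\prime}u'\}_{k=0}^{d'}$ is a basis of $V'$, evaluation at $u'$ is a $\mathbb{K}$-linear isomorphism $\mathscr{D}^{*\prime} \to V'$ carrying $Z$ onto the $2$-dimensional subspace $E_0'V'+E_1'V'$; hence $\dim Z = 2$. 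Both $I'$ and $A^{*\prime}$ belong to $Z$ (the latter by (LS5) for $\Phi'$), and they are linearly independent because $d' \geq 1$ and the $\theta_k^{*\prime}$ are pairwise distinct by (PA2) for $\Phi'$; so they span $Z$. Hence $A^{*\sigma}=\alpha A^{*\prime}+\beta I'$ for some $\alpha,\beta \in \mathbb{K}$, with $\alpha \neq 0$ because the eigenvalues $\{\theta_{j+\rho}^*\}_{j=0}^{d'}$ of $A^{*\sigma}$ are distinct by (PA2) for $\Phi$. Solving for $A^{*\prime}$ yields (i) with $\xi^*=\alpha^{-1}$ and $\zeta^*=-\beta/\alpha$.

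For (ii), the analogous inputs are the stronger identity $S^*E_0V=E_dV$ from \eqref{switching} and the inclusion $\mathrm{proj}(E_dV) \subseteq E_{d'}'V'$, which follows from (B2) with $i=d$ (forcing $j \geq d'$). Thus $S^{*\sigma}E_0'V' \subseteq E_{d'}'V'$, and by \cite[Theorem 6.7]{NT2008LAAa} applied to $\Phi'$, $S^{*\sigma}$ is a scalar multiple of $S^{*\prime}$. Comparing the $E_0^{*\prime}$-coefficients on both sides---namely $\phi_1\phi_2\cdots\phi_{\rho}/(\varphi_1\varphi_2\cdots\varphi_{\rho})$ for $S^{*\sigma}$ and $1$ for $S^{*\prime}$---pins down the scalar and gives (ii). The main obstacle is the characterization step in (i): unlike $S^{*\prime}$, no ready-made uniqueness statement is available for $A^{*\prime}$ within $\mathscr{D}^{*\prime}$, so the $2$-dimensionality of $Z$ and the spanning property of $\{I',A^{*\prime}\}$ must be verified by hand; everything else reduces to inspection of (B2), (LS5), and elementary linear algebra.
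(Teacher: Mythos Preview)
Your argument is correct and follows essentially the same route as the paper. The only cosmetic difference is packaging: the paper verifies $A^{*\sigma}E_0'V'\subseteq E_0'V'+E_1'V'$ (and the analogous inclusion for $S^{*\sigma}$) by a direct chain of equalities through $\langle\cdot,\cdot\rangle'$ and $(\cdot|\cdot)$, whereas you first isolate the intertwining identity $\mathrm{proj}(Xv)=X^{\sigma}\,\mathrm{proj}(v)$ and then read off the inclusions; and for the characterization of $A^{*\sigma}$ inside $\mathscr{D}^{*\prime}$ the paper simply invokes \eqref{ith component of 0*} (applied to $\Phi^{\prime *}$), which is exactly your $2$-dimensionality argument for $Z$ via the evaluation isomorphism $X\mapsto Xu'$.
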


\begin{proof}
(i):
For $2\leqslant i\leqslant d'$ we have
\begin{align*}
	\langle A^{*\sigma}E_0'V',E_i'V'\rangle' &= \langle E_0'V',A^{*\sigma}E_i'V'\rangle' \\
		&= (E_0V|A^{*\sigma}E_i'V') && \text{by}\ \eqref{projE0V=E0'V'} \\
		&= (A^*E_0V|E_i'V') \\
		&= 0
\end{align*}
since $A^*E_0V\subseteq E_0V+E_1V$; it follows that $A^{*\sigma}E_0'V'\subseteq E_0'V'+E_1'V'$.
Now it is clear from \eqref{ith component of 0*} that $A^{*\sigma}\in\mathrm{span}\{A^{*\prime},I'\}$.

(ii):
For $0\leqslant i\leqslant d'-1$ we have
\begin{align*}
	\langle S^{*\sigma}E_0'V',E_i'V'\rangle' &= \langle E_0'V',S^{*\sigma}E_i'V'\rangle' \\
	&= (E_0V|S^{*\sigma}E_i'V') && \text{by}\ \eqref{projE0V=E0'V'} \\
	&= (S^*E_0V|E_i'V') \\
	&= (E_dV|E_i'V') && \text{by}\ \eqref{switching} \\
	&= 0,
\end{align*}
so that $S^{*\sigma}E_0'V'\subseteq E_{d'}'V'$.
Hence (\cite[Theorem 6.7]{NT2008LAAa})  $S^{*\sigma}$ is a scalar multiple of $S^{*\prime}$, and the scalar factor is given by comparing the coefficient of $E_0^{*\prime}$ in $S^{*\prime}$, $S^{*\sigma}$.
\end{proof}


\section{Reconstruction of the balanced bilinear form}\label{sec: reconstruction of bilinear form}

In this section, we shall see that \eqref{A*', S*' in terms of A*sigma, S*sigma} turns out to give a necessary and sufficient condition on the existence of a balanced bilinear form.
With reference to \eqref{Phi, Phi'}, let $\rho$ be an integer such that $0\leqslant\rho\leqslant d-d'$.
As in \S \ref{sec: properties of bilinear form}, define a $\mathbb{K}$-algebra homomorphism $\sigma:\mathscr{D}^*\rightarrow\mathscr{D}^{*\prime}$ by
\begin{equation}
	E_i^{*\sigma}=E_{i-\rho}^{*\prime} \quad (0\leqslant i\leqslant d).
\end{equation}
We shall assume (i), (ii) below:
\begin{enumerate}
\item There are scalars $\xi^*,\zeta^*\in\mathbb{K}$ such that $A^{*\prime}=\xi^*A^{*\sigma}+\zeta^*I'$.
\item $S^{*\prime}=\dfrac{\varphi_1\varphi_2\dots\varphi_{\rho}}{\phi_1\phi_2\dots\phi_{\rho}}S^{*\sigma}$.
\end{enumerate}

Let $u$ (resp. $u'$) be a nonzero vector in $E_0V$ (resp. $E_0'V'$).
We define a bilinear form $(\cdot|\cdot): V\times V'\rightarrow\mathbb{K}$ by
\begin{equation}\label{reconstruction}
	(E_i^*u|E_j^{*\prime}u')=\delta_{i-\rho,j}k_j'\nu^{\prime-1}||u'||^{\prime 2} \quad (0\leqslant i\leqslant d,\ 0\leqslant j\leqslant d').
\end{equation}
Clearly we have
\begin{equation}
	(Xv|v')=(v|X^{\sigma}v') \quad (v\in V,\ v'\in V',\ X\in\mathscr{D}^*).
\end{equation}
It follows that
\begin{note}\label{bilinear form is balanced}
The following \textup{(i)}, \textup{(ii)} hold:
\begin{enumerate}
\item $(E_0V+\dots+E_{i-1}V|E_i'V'+\dots+E_{d'}'V')=0$ \ $(1\leqslant i\leqslant d')$.
\item $(E_{i+d-d'}V+\dots+E_dV|E_0'V'+\dots+E_{i-1}'V')=0$ \ $(1\leqslant i\leqslant d')$.
\end{enumerate}
In other words, $(\cdot|\cdot)$ satisfies \textup{(B2)} and is therefore balanced with respect to $\Phi$, $\Phi'$.
\end{note}

\begin{proof}
(i):
Suppose to begin with that $i=1$.
From \eqref{reconstruction} and \eqref{<Ei*u,Ej*u>} we have
\begin{equation*}
	(u|E_j^{*\prime}u')=\langle u',E_j^{*\prime}u'\rangle' \quad (0\leqslant j\leqslant d'),
\end{equation*}
from which it follows that $(u|v')=\langle u',v'\rangle'$ for \emph{all} $v'\in V'$.
Thus we obtain
\begin{equation*}
	(E_0V|E_1'V'+\dots+E_{d'}'V')=\langle E_0'V',E_1'V'+\dots+E_{d'}'V'\rangle'=0.
\end{equation*}
For $1<i\leqslant d'$ we now compute
\begin{align*}
	(E_0V+\dots+E_{i-1}V|E_i'V'+\dots+E_{d'}'V') \hspace{-3cm} \\
	&=\sum_{k=0}^{i-1}\sum_{\ell=0}^{d'-i}(A^{*k}E_0V|A^{*\sigma\ell}E_{d'}'V') \\
	&=\sum_{k=0}^{i-1}\sum_{\ell=0}^{d'-i}(E_0V|(A^{*\sigma})^{k+\ell}E_{d'}'V') \\
	&=(E_0V|E_1'V'+\dots+E_{d'}'V') \\
	&=0.
\end{align*}

(ii):
From \eqref{switching} it follows that
\begin{align*}
	E_{i+d-d'}V+\dots+E_dV&=E_dV+A^*E_dV+\dots+(A^*)^{d'-i}E_dV \\
	&=S^*(E_0V+A^*E_0V+\dots+(A^*)^{d'-i}E_0V) \\
	&=S^*(E_0V+\dots+E_{d'-i}V).
\end{align*}
Likewise we have
\begin{align*}
	E_0'V'+\dots+E_{i-1}'V'&=(S^{*\prime})^{-1}(E_{d'-i+1}'V'+\dots+E_{d'}'V') \\
	&=(S^{*\sigma})^{-1}(E_{d'-i+1}'V'+\dots+E_{d'}'V').
\end{align*}
Hence the result follows from (i).
\end{proof}

Finally, we may conveniently summarize \eqref{A*', S*' in terms of A*sigma, S*sigma} and \eqref{bilinear form is balanced} in the following form:

\begin{thm}\label{characterization of bilinear form in terms of parameter arrays}
With reference to \eqref{Phi, Phi'}, $\Phi'$ is a $\rho$-descendent of $\Phi'$ if and only if the parameter arrays of $\Phi$, $\Phi'$ satisfy \textup{(i)}, \textup{(ii)} below:
\begin{enumerate}
\item There are scalars $\xi^*,\zeta^*\in\mathbb{K}$ such that $\theta_i^{*\prime}=\xi^*\theta_{\rho+i}^*+\zeta^*$ $(0\leqslant i\leqslant d')$.
\item $\dfrac{\phi_i'}{\varphi_i'}=\dfrac{\phi_{\rho+i}}{\varphi_{\rho+i}}$ \ $(1\leqslant i\leqslant d')$.
\end{enumerate}
Moreover, if \textup{(i)}, \textup{(ii)} hold above then a bilinear form $(\cdot|\cdot):V\times V'\rightarrow\mathbb{K}$ which is $\rho$-balanced with respect to $\Phi$, $\Phi'$ is unique up to scalar multiplication.
\end{thm}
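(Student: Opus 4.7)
The plan is to view this theorem as a reformulation of Proposition \eqref{A*', S*' in terms of A*sigma, S*sigma} (necessity) together with Note \eqref{bilinear form is balanced} (sufficiency), with the only substantive task being to translate the operator-level conditions (i), (ii) of \eqref{A*', S*' in terms of A*sigma, S*sigma} into the scalar conditions (i), (ii) of the present theorem. Forward direction: assume $\Phi'$ is a $\rho$-descendent of $\Phi$ and invoke \eqref{A*', S*' in terms of A*sigma, S*sigma}. Converse direction: assume the parameter-array conditions, reverse the translation to recover the operator-level conditions, and then apply \eqref{bilinear form is balanced} to the form built from \eqref{reconstruction}.

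The translation is carried out by computing spectral decompositions relative to $\{E_i^{*\prime}\}_{i=0}^{d'}$. Since $\sigma:\mathscr{D}^*\to\mathscr{D}^{*\prime}$ sends $E_i^*\mapsto E_{i-\rho}^{*\prime}$ (where out-of-range terms vanish), one obtains
\begin{equation*}
	A^{*\sigma}=\sum_{i=0}^{d'}\theta_{\rho+i}^*\,E_i^{*\prime}, \qquad S^{*\sigma}=\sum_{i=0}^{d'}\frac{\phi_1\phi_2\dots\phi_{\rho+i}}{\varphi_1\varphi_2\dots\varphi_{\rho+i}}\,E_i^{*\prime}.
\end{equation*}
Reading off eigenvalues of $A^{*\prime}=\xi^*A^{*\sigma}+\zeta^*I'$ on $E_i^{*\prime}V'$ gives $\theta_i^{*\prime}=\xi^*\theta_{\rho+i}^*+\zeta^*$, which is condition (i). Similarly, comparing the coefficient of $E_i^{*\prime}$ on both sides of $S^{*\prime}=\frac{\varphi_1\cdots\varphi_\rho}{\phi_1\cdots\phi_\rho}S^{*\sigma}$ yields
\begin{equation*}
	\frac{\phi_1'\phi_2'\dots\phi_i'}{\varphi_1'\varphi_2'\dots\varphi_i'}=\frac{\phi_{\rho+1}\phi_{\rho+2}\dots\phi_{\rho+i}}{\varphi_{\rho+1}\varphi_{\rho+2}\dots\varphi_{\rho+i}} \quad (0\leqslant i\leqslant d'),
\end{equation*}
and dividing the $i$-th by the $(i-1)$-st equation gives condition (ii). Every step of this computation is reversible — the eigenvalues $\{\theta_{\rho+i}^*\}_{i=0}^{d'}$ of $A^{*\sigma}$ are mutually distinct by \eqref{PA2}, so (i) forces $A^{*\prime}=\xi^*A^{*\sigma}+\zeta^*I'$; and (ii) together with $(\phi_1\cdots\phi_\rho)/(\varphi_1\cdots\varphi_\rho)\neq 0$ forces the stated proportionality of $S^{*\prime}$ and $S^{*\sigma}$. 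Hence the two pairs of conditions are equivalent, completing both directions.

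For uniqueness, suppose $(\cdot|\cdot)_1$ and $(\cdot|\cdot)_2$ are both $\rho$-balanced with respect to $\Phi$, $\Phi'$. Fix nonzero $u\in E_0V$ and $u'\in E_0'V'$; then $\{E_i^*u\}_{i=0}^d$ and $\{E_j^{*\prime}u'\}_{j=0}^{d'}$ are bases of $V$ and $V'$ respectively, so each form is determined by its values on the pairs $(E_i^*u,E_j^{*\prime}u')$. Proposition \eqref{values of products} says these values all vanish unless $i-\rho=j$, and in that case they equal $\epsilon_n\cdot k_j'\nu'^{-1}\|u'\|'^2$ for a single nonzero scalar $\epsilon_n$ depending only on the form $n\in\{1,2\}$. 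Consequently $(\cdot|\cdot)_1=(\epsilon_1/\epsilon_2)(\cdot|\cdot)_2$, which is the asserted uniqueness up to scalar multiplication.

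The only step requiring any care is making sure the $\sigma$-image of the spectral decompositions is computed correctly with the out-of-range idempotents set to zero, and that the proportionality scalar in (ii) of \eqref{A*', S*' in terms of A*sigma, S*sigma} exactly cancels the shift in the leading $\rho$ factors of the $\phi/\varphi$ products; beyond that bookkeeping, the argument is a direct assembly of the preceding propositions.
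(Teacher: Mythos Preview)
Your proposal is correct and follows exactly the paper's approach: the paper's own proof simply states that the theorem ``is just a restatement of \eqref{A*', S*' in terms of A*sigma, S*sigma} and \eqref{bilinear form is balanced} in terms of the parameter arrays of $\Phi$, $\Phi'$'' with uniqueness from \eqref{values of products}. You have supplied the explicit spectral-decomposition translation that the paper leaves to the reader, and the details you give (the $\sigma$-images of $A^*$ and $S^*$, the telescoping, and the reversibility) are all accurate.
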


\begin{proof}
This is just a restatement of \eqref{A*', S*' in terms of A*sigma, S*sigma} and \eqref{bilinear form is balanced} in terms of the parameter arrays of $\Phi$, $\Phi'$.
The uniqueness follows from \eqref{values of products}.
\end{proof}

\begin{rem}
The endpoint $\rho$ is not necessarily uniquely determined by the parameter arrays of $\Phi$, $\Phi'$.
Indeed, with the notation of \eqref{list of parameter arrays} suppose that
\begin{equation*}
	p(\Phi)=p(\mathrm{IIC};r,s,s^*,\theta_0,\theta_0^*,d), \quad p(\Phi')=p(\mathrm{IIC};r,s,s^*,\theta_0',\theta_0^{*\prime},d').
\end{equation*}
Then conditions (i), (ii) in \eqref{characterization of bilinear form in terms of parameter arrays} are satisfied for \emph{all} $0\leqslant\rho\leqslant d-d'$.
\end{rem}

\section{Characterization of a balanced bilinear form in parametric form}\label{sec: bilinear form in parametric form}

In this section, we shall classify all the descendents of $\Phi$.
With reference to \eqref{Phi, Phi'}, we shall assume that $\Phi'$ is a $\rho$-descendent of $\Phi$ $(0\leqslant\rho\leqslant d-d')$, unless otherwise stated.
Let
\begin{equation*}
	\vartheta_i=\sum_{\ell=0}^{i-1}\frac{\theta_{\ell}-\theta_{d-\ell}}{\theta_0-\theta_d} \quad (1\leqslant i\leqslant d).
\end{equation*}
Clearly, $\vartheta_1=\vartheta_d=1$.
Moreover (\cite[Lemma 6.5]{Terwilliger2001LAA}):
\begin{equation}\label{how phi, varphi are related}
	\varphi_i-\phi_i=(\theta_i^*-\theta_{i-1}^*)(\theta_0-\theta_d)\vartheta_i \quad (1\leqslant i\leqslant d).
\end{equation}

\begin{note}\label{equations involving parameter arrays of Phi, Phi'}
With the notation of \eqref{characterization of bilinear form in terms of parameter arrays}\textup{(i)}, the following \textup{(i)}--\textup{(iii)} hold:
\begin{enumerate}
\item $(\theta_d-\theta_0)\vartheta_{\rho+i}\varphi_i'=\xi^*(\theta_{d'}'-\theta_0')\vartheta_i'\varphi_{\rho+i}$ \ $(1\leqslant i\leqslant d')$.
\item $(\theta_d-\theta_0)\vartheta_{\rho+i}\phi_i'=\xi^*(\theta_{d'}'-\theta_0')\vartheta_i'\phi_{\rho+i}$ \ $(1\leqslant i\leqslant d')$.
\item $(\theta_d-\theta_0)\vartheta_{\rho+1}\vartheta_{\rho+i}(\theta_{\rho+i}^*-\theta_{\rho}^*)(\theta_{d'-i+1}'-\theta_0')=(\theta_{d'}'-\theta_0')\vartheta_i'(\vartheta_{\rho+1}\phi_{\rho+i}-\vartheta_{\rho+i}\varphi_{\rho+1})$ \ $(1\leqslant i\leqslant d')$.
\end{enumerate}
\end{note}

\begin{proof}
(i), (ii):
Evaluate \eqref{characterization of bilinear form in terms of parameter arrays}(ii) in two ways using \eqref{how phi, varphi are related} and \eqref{characterization of bilinear form in terms of parameter arrays}(i).

(iii):
From \eqref{PA4} and \eqref{characterization of bilinear form in terms of parameter arrays}(i) it follows that
\begin{equation*}
	\phi_i'=\varphi_1'\vartheta_i'+\xi^*(\theta_{\rho+i}^*-\theta_{\rho}^*)(\theta_{d'-i+1}'-\theta_0').
\end{equation*}
On multiplying both sides above by $(\theta_d-\theta_0)\vartheta_{\rho+1}\vartheta_{\rho+i}$ and simplifying the result using (i) and (ii), we obtain (iii).
\end{proof}

We shall need the explicit values of the $\vartheta_i$.
With the notation of \eqref{list of parameter arrays} we have
\begin{equation}\label{vartheta}
	\vartheta_i=\begin{cases} \dfrac{(q^i-1)(q^{d-i+1}-1)}{(q-1)(q^d-1)} & \text{\emph{for Cases I, IA}}, \\ i(d-i+1)/d  & \text{\emph{for Cases II, IIA, IIB, IIC},} \\ i/d & \text{\emph{for Case III},}\ d \ \text{\emph{even},}\ i \ \text{\emph{even},} \\ (d-i+1)/d & \text{\emph{for Case III},}\ d \ \text{\emph{even},}\ i \ \text{\emph{odd},} \\ 0 & \!\!\!\begin{array}[t]{l} \text{\emph{for Case III},}\ d \ \text{\emph{odd},}\ i \ \text{\emph{even};} \\[-1mm] \text{\emph{or Case IV},}\ i \ \text{\emph{even},} \end{array} \\ 1 & \!\!\!\begin{array}[t]{l} \text{\emph{for Case III},}\ d \ \text{\emph{odd},}\ i \ \text{\emph{odd};} \\[-1mm] \text{\emph{or Case IV},}\ i \ \text{\emph{odd}} \end{array} \end{cases} \quad (1\leqslant i\leqslant d).
\end{equation}
(See e.g. \cite[Lemma 10.2]{Terwilliger2001LAA}.)
It should be remarked that from \eqref{PA1} and \eqref{PA2} we obtain restrictions on the scalar $q$ for Cases I, IA, and on the characteristic of $\mathbb{K}$ for Cases II, IIA, IIB, IIC and III.
We can then see that
\begin{note}\label{when vartheta=0}
We have $\vartheta_i=0$ precisely for Case III, $d$ odd, $i$ even; or Case IV, $i$ even.
\end{note}

Henceforth let $\beta+1$ denote the common value of \eqref{PA5}.
By convention, if $d<3$ then $\beta$ can be taken to be \emph{any} scalar in $\mathbb{K}$.
We may remark that
\begin{equation}\label{beta}
	\beta=\begin{cases} q+q^{-1} & \text{\emph{for Cases I, IA},} \\ 2 & \text{\emph{for Cases II, IIA, IIB, IIC},} \\ -2 & \text{\emph{for Case III},} \\ 0 & \text{\emph{for Case IV}.} \end{cases}
\end{equation}
It follows from \eqref{characterization of bilinear form in terms of parameter arrays}(i) that
\begin{equation}\label{beta'=beta}
	\beta'=\beta.
\end{equation}
For Case III, we have
\begin{note}\label{restrictions for Case III}
If $p(\Phi)$ satisfies Case III, then the following \textup{(i)}, \textup{(ii)} hold:
\begin{enumerate}
\item If $d$ is even, then either $d'=1$ or $d'$ is even.
\item If $d$ is odd, then $d'$ is odd and $\rho$ is even.
\end{enumerate}
\end{note}

\begin{proof}
From \eqref{PA1} and \eqref{equations involving parameter arrays of Phi, Phi'}(i) it follows that  $\vartheta_i'=0$ precisely when $\vartheta_{\rho+i}=0$, for $1\leqslant i\leqslant d'$.
Hence the result follows from \eqref{when vartheta=0}--\eqref{beta'=beta}.
\end{proof}

\noindent
Likewise,
\begin{note}\label{restrictions for Case IV}
If $p(\Phi)$ satisfies Case IV, then $(d',\rho)\in\{(1,0),(1,2),(3,0)\}$.
\end{note}

\begin{thm}\label{characterization of bilinear form in parametric form}
With reference to \eqref{Phi, Phi'}, let the parameter array of $\Phi$ be given as in \eqref{list of parameter arrays}.
Then $\Phi'$ is a $\rho$-descendent of $\Phi$ precisely when the parameter array of $\Phi'$ takes the following form:

\medskip\noindent
Case I:
\begin{equation*}
	p(\Phi')=p(\mathrm{I};q,h',h^{*\prime},r_1q^{\rho},r_2q^{\rho},sq^{d-d'},s^*q^{2\rho},\theta_0',\theta_0^{*\prime},d').
\end{equation*}
Case IA:
\begin{equation*}
	p(\Phi')=p(\mathrm{IA};q,h^{*\prime},r',s',\theta_0',\theta_0^{*\prime},d') \quad \text{where} \quad s'/r'=q^{d-d'-\rho}s/r.
\end{equation*}
Case II:
\begin{equation*}
	p(\Phi')=p(\mathrm{II};h',h^{*\prime},r_1+\rho,r_2+\rho,s+d-d',s^*+2\rho,\theta_0',\theta_0^{*\prime},d').
\end{equation*}
Case IIA:
\begin{equation*}
	p(\Phi')=p(\mathrm{IIA};h',r+\rho,s+d-d',s^{*\prime},\theta_0',\theta_0^{*\prime},d').
\end{equation*}
Case IIB:
\begin{equation*}
	p(\Phi')=p(\mathrm{IIB};h^{*\prime},r+\rho,s',s^*+2\rho,\theta_0',\theta_0^{*\prime},d').
\end{equation*}
Case IIC:
\begin{equation*}
	p(\Phi')=p(\mathrm{IIC};r',s',s^{*\prime},\theta_0',\theta_0^{*\prime},d') \quad \text{where} \quad s's^{*\prime}/r'=ss^*/r.
\end{equation*}
Case III, $d$ even, $d'$ even, $\rho$ even; or Case III, $d$ odd, $d'$ odd, $\rho$ even:
\begin{equation*}
	p(\Phi')=p(\mathrm{III};h',h^{*\prime},r_1+\rho,r_2+\rho,s-d+d',s^*-2\rho,\theta_0',\theta_0^{*\prime},d').
\end{equation*}
Case III, $d$ even, $d'$ even, $\rho$ odd:
\begin{equation*}
	p(\Phi')=p(\mathrm{III};h',h^{*\prime},r_2+\rho,r_1+\rho,s-d+d',s^*-2\rho,\theta_0',\theta_0^{*\prime},d').
\end{equation*}
Case III, $d$ even, $d'=1$; or Case IV, $(d',\rho)\in\{(1,0),(1,2)\}$:
\begin{equation*}
	p(\Phi')=p(\mathrm{IIC};r',s',s^{*\prime},\theta_0',\theta_0^{*\prime},1) \quad \text{where} \quad s's^{*\prime}/r'=1-\phi_{\rho+1}/\varphi_{\rho+1}.
\end{equation*}
Case IV, $(d',\rho)=(3,0)$:
\begin{equation*}
	p(\Phi')=p(\mathrm{IV};h',h^{*\prime},r,s,s^*,\theta_0',\theta_0^{*\prime}).
\end{equation*}
Case III, $d$ even, $d'$ odd $\geqslant 3$; or Case III, $d$ odd, either $d'$ even or $\rho$ odd; or Case IV, $(d',\rho)\in\{(1,1),(2,0),(2,1)\}$: Does not occur.
\end{thm}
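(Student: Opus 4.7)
The plan is to convert the two abstract conditions of \eqref{characterization of bilinear form in terms of parameter arrays} into explicit relations among the parameters of $\Phi$ and $\Phi'$, case by case, using the formulas recorded in the appendix \eqref{list of parameter arrays}. First, condition (i) of \eqref{characterization of bilinear form in terms of parameter arrays} together with \eqref{beta'=beta} forces $\beta'=\beta$, so by \eqref{beta} the array $p(\Phi')$ must lie in the same $\beta$-family as $p(\Phi)$, namely one of $\{\mathrm{I},\mathrm{IA}\}$, $\{\mathrm{II},\mathrm{IIA},\mathrm{IIB},\mathrm{IIC}\}$, $\{\mathrm{III}\}$, or $\{\mathrm{IV}\}$. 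The subcase within a family will then be pinned down by which of the shift parameters is allowed to degenerate, subject to \eqref{PA1} and \eqref{PA2}.

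Next, condition (i) of \eqref{characterization of bilinear form in terms of parameter arrays} says that $\{\theta_i^{*\prime}\}_{i=0}^{d'}$ is obtained from $\{\theta_{\rho+i}^*\}_{i=0}^{d'}$ by a single affine transformation. Substituting the canonical form of $\theta_i^*$ read off from \eqref{list of parameter arrays}, one finds that each index-shift parameter appearing in $\theta^*$ (the $r_1,r_2$ of Cases I, II, III, or the single $r$ of Cases IIA, IIB) is shifted by $\rho$, the ``quadratic'' parameter $s^*$ is shifted by $2\rho$, and the base parameters $h^{*\prime}$, $\theta_0^{*\prime}$ remain free (the former absorbing the scalar $\xi^*$). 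This reproduces every starred prime in the displayed arrays.

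Then, condition (ii) of \eqref{characterization of bilinear form in terms of parameter arrays} on the ratio $\phi_i'/\varphi_i'$, re-expressed via \eqref{equations involving parameter arrays of Phi, Phi'}(iii) and \eqref{how phi, varphi are related}, pins down the remaining unstarred shifts: in Cases I, II, III the parameter $s$ is shifted by $d-d'$, while in the self-dual-like Cases IA and IIC only the multiplicative invariants $s/r$ and $ss^*/r$ are preserved (analogously for the pure Case IV entry). The parity subcases of Case III arise from the parity dependence of $\vartheta_i$ recorded in \eqref{vartheta}, which forces the swap $(r_1,r_2)\leftrightarrow(r_2,r_1)$ when $\rho$ is odd but $d,d'$ are even. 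The ``Does not occur'' entries come directly from \eqref{restrictions for Case III} and \eqref{restrictions for Case IV}: in each of those configurations some $\vartheta_{\rho+i}$ vanishes while the corresponding $\vartheta_i'$ does not, contradicting \eqref{equations involving parameter arrays of Phi, Phi'}(i). The $d'=1$ collapses to Case IIC follow because $\beta'$ is unconstrained in diameter one, so that only the single equation $\phi_1'/\varphi_1'=\phi_{\rho+1}/\varphi_{\rho+1}$ survives and yields the stated constraint $s's^{*\prime}/r'=1-\phi_{\rho+1}/\varphi_{\rho+1}$.

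The main obstacle will be the bookkeeping in Case III, where the four parity combinations of $d,d',\rho$ produce genuinely distinct formulas, and in the Case IV analysis, which requires handling each admissible pair $(d',\rho)\in\{(1,0),(1,2),(3,0)\}$ separately. For each displayed parameter array the converse direction is a direct substitution: one verifies \eqref{PA1}--\eqref{PA5} and checks conditions (i), (ii) of \eqref{characterization of bilinear form in terms of parameter arrays} by explicit computation with the formulas of \eqref{list of parameter arrays}, completing the classification.
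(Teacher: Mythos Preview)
Your plan is essentially the paper's own: reduce to the two conditions of \eqref{characterization of bilinear form in terms of parameter arrays}, feed in the explicit formulas from \eqref{list of parameter arrays} and \eqref{vartheta}, and invoke \eqref{restrictions for Case III}, \eqref{restrictions for Case IV} for the exclusions. Two points need tightening.

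First, a minor misattribution: the parameters $r_1,r_2$ (or the single $r$) do \emph{not} appear in the formulas for $\theta_i^*$ in \eqref{list of parameter arrays}; only $s^*$ (and $h^*,\theta_0^*$) does. Condition~(i) of \eqref{characterization of bilinear form in terms of parameter arrays} therefore determines only $s^{*\prime}$ and the free $h^{*\prime},\theta_0^{*\prime}$. The shifts in $r_1,r_2,r$ and in $s$ all come from \eqref{equations involving parameter arrays of Phi, Phi'}(i)--(iii), not from the $\theta^*$-matching.

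Second, and more substantively: in Case~III with $d$ odd (and likewise Case~IV with $(d',\rho)=(3,0)$) the identities \eqref{equations involving parameter arrays of Phi, Phi'}(i)--(iii) are \emph{vacuous} for even $i$, because $\vartheta_{\rho+i}=0$ there by \eqref{when vartheta=0}. So your plan as stated determines $\theta_i',\varphi_i',\phi_i'$ only for odd $i$. The paper closes this gap by using $\beta'=-2$ (from \eqref{beta'=beta}) to run a short induction that fixes $\theta_i'$ for even $i$, and then recovers the missing $\varphi_i',\phi_i'$ from \eqref{PA3}, \eqref{PA4}. You should make this step explicit; without it the Case~III odd-$d$ and Case~IV $(3,0)$ rows are not actually pinned down.
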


\begin{proof}
Suppose first that $\Phi'$ is a $\rho$-descendent of $\Phi$.
The last line follows from \eqref{restrictions for Case III} and \eqref{restrictions for Case IV}.
For Cases I, IA, II, IIA, IIB, IIC; or for Case III with $d$ even and $d'$ even, it is a straightforward matter to show that $p(\Phi')$ is given as in \eqref{characterization of bilinear form in parametric form} by evaluating \eqref{characterization of bilinear form in terms of parameter arrays}(i) and \eqref{equations involving parameter arrays of Phi, Phi'}(i)--(iii) using \eqref{vartheta}--\eqref{beta'=beta} and \eqref{list of parameter arrays}.
(For example, $h'=hq^{d'-d}(q^d-1)(\theta_{d'}'-\theta_0')/(q^{d'}-1)(\theta_d-\theta_0)$, $h^{*\prime}=\xi^*h^*q^{-\rho}$ for Case I.)
For Case III with $d$ odd, $d'$ odd, $\rho$ even, likewise we have
\begin{align*}
	\theta_i^{*\prime}&=\theta_0^{*\prime}+h^{*\prime}(s^*-2\rho-1+(1-s^*+2\rho+2i)(-1)^i) \hspace{-5mm} & (0\leqslant i\leqslant d'), \\
	\theta_i'&=\theta_0'+2h'(s-d+d'-1-i) & (0\leqslant i\leqslant d',\ i\ \text{odd}), \\
	\varphi_i'&=-4h'h^{*\prime}(i+r_1+\rho)(i+r_2+\rho) & (0\leqslant i\leqslant d',\ i\ \text{odd}), \\
	\phi_i'&=-4h'h^{*\prime}(i-s^*+\rho-r_1)(i-s^*+\rho-r_2) & (0\leqslant i\leqslant d',\ i\ \text{odd}),
\end{align*}
where $h'=h(\theta_{d'}'-\theta_0')(\theta_d-\theta_0)^{-1}$, $h^{*\prime}=\xi^*h^*$.
Since $\beta'=-2$ by \eqref{beta'=beta} we see that
\begin{equation*}
	\theta_i'=\theta_0'+2h'i \quad (0\leqslant i\leqslant d',\ i\ \text{even})
\end{equation*}
by induction on \emph{even} $i$.
Hence from \eqref{PA3} and \eqref{PA4} it follows that $p(\Phi')$ is given as in \eqref{characterization of bilinear form in parametric form}.
The same argument applies to Case IV with $(d',\rho)=(3,0)$.
For Case III with $d$ even and $d'=1$; or for Case IV with $(d',\rho)\in\{(1,0),(1,2)\}$, we define $s'=\theta_1'-\theta_0'$, $s^{*\prime}=\theta_1^{*\prime}-\theta_0^{*\prime}$, $r'=-\varphi_1'$.
Then by \eqref{PA4} we have $\phi_1'=-(r'-s's^{*\prime})$, so that $p(\Phi')=p(\mathrm{IIC};r',s',s^{*\prime},\theta_0',\theta_0^{*\prime},1)$.
Furthermore, from \eqref{characterization of bilinear form in terms of parameter arrays}(ii) it is clear that $s's^{*\prime}/r'=1-\varphi_1'/\phi_1'=1-\varphi_{\rho+1}/\phi_{\rho+1}$.

Conversely, suppose that $p(\Phi')$ is of the form in \eqref{characterization of bilinear form in parametric form}.
Then it is easy to check \eqref{characterization of bilinear form in terms of parameter arrays}(i), (ii) and therefore $\Phi'$ is a $\rho$-descendent of $\Phi$.
This completes the proof.
\end{proof}

\section{Characterization of $\Phi'$ in terms of a balanced bilinear form}\label{sec: Phi' in terms of bilinear form}

The goal of this section is to  characterize the Leonard system $\Phi'$ in terms of the balanced bilinear form $(\cdot|\cdot)$.
We shall refer to the following set-up:

\begin{conv}\label{assumption on d', rho}
Let $\Phi$ be the Leonard system \eqref{Leonard system} and let the parameter array of $\Phi$ be given as in \eqref{list of parameter arrays}.
Let $d'$ be a positive integer such that $d'\leqslant d$, $\mathscr{A}'$ a $\mathbb{K}$-algebra isomorphic to $\mathrm{Mat}_{d'+1}(\mathbb{K})$, and $V'$ an irreducible left $\mathscr{A}'$-module.
Let $\rho$ be an integer such that $0\leqslant\rho\leqslant d-d'$.
We shall assume \textup{(i)}, \textup{(ii)} below:
\begin{enumerate}
\item For Case III, if $d$ is even then either $d'=1$ or $d'$ is even; if $d$ is odd then $d'$ is odd and $\rho$ is even.
\item For Case IV, $(d',\rho)\in\{(1,0),(1,2),(3,0)\}$.
\end{enumerate}
\end{conv}

\begin{note}\label{existence of Phi'}
There is a $\rho$-descendent of $\Phi$ with diameter $d'$.
\end{note}

\begin{proof}
We shall invoke \eqref{characterization of bilinear form in parametric form}.
Suppose first that we are in Case III with $d$ even and $d'=1$; or in Case IV with $(d',\rho)\in\{(1,0),(1,2)\}$.
From \eqref{how phi, varphi are related} and \eqref{when vartheta=0} it follows that $\varphi_{\rho+1}\ne\phi_{\rho+1}$ so that $s's^{*\prime}/r'=1-\phi_{\rho+1}/\varphi_{\rho+1}\not\in\{0,1\}$.
Hence $p(\mathrm{IIC};r',s',s^{*\prime},\theta_0',\theta_0^{*\prime},1)$ satisfies \eqref{PA1}--\eqref{PA5}.
For the other cases, the feasibility of the parameter array in \eqref{characterization of bilinear form in parametric form} can be directly checked from \cite[Examples 5.3--5.15]{Terwilliger2005DCC}.
\end{proof}

A \emph{decomposition} of $V'$ shall mean a sequence $\{U_i'\}_{i=0}^{d'}$ of one-dimensional subspaces of $V'$ such that $V'=U_0'+U_1'+\dots+U_{d'}'$ (direct sum).
The following theorem will prove useful in the study of $Q$-polynomial distance-regular graphs (cf. \cite{Tanaka2008pre}):

\begin{thm}\label{characterization of Phi' in terms of bilinear form}
With reference to \eqref{assumption on d', rho}, let $\{U_i'\}_{i=0}^{d'}$, $\{U_i^{*\prime}\}_{i=0}^{d'}$ be decompositions of $V'$.
Assume that there is a bilinear form $(\cdot|\cdot):V\times V'\rightarrow\mathbb{K}$ satisfying \textup{(i)}--\textup{(iii)} below:
\begin{enumerate}
\item $(E_i^*V|U_j^{*\prime})=0$ if $i-\rho\ne j$ \ $(0\leqslant i\leqslant d, \ 0\leqslant j\leqslant d')$.
\item $(E_iV|U_j')=0$ if $i<j$ or $i>j+d-d'$ \ $(0\leqslant i\leqslant d, \ 0\leqslant j\leqslant d')$.
\item $(\cdot|\cdot)$ has full-rank.
\end{enumerate}
Then there is a $\rho$-descendent $\Phi'=\bigl(A';A^{*\prime};\{E_i'\}_{i=0}^{d'};\{E_i^{*\prime}\}_{i=0}^{d'}\bigr)$ of $\Phi$ in $\mathscr{A}'$ such that $E_i'V'=U_i'$, $E_i^{*\prime}V'=U_i^{*\prime}$, for $0\leqslant i\leqslant d'$.
In particular, $(\cdot|\cdot)$ is $\rho$-balanced with respect to $\Phi$, $\Phi'$.
Moreover, $\Phi'$ is unique up to affine transformation.
\end{thm}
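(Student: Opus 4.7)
The plan is to reduce to the reference Leonard system supplied by \eqref{existence of Phi'} and transport it to $V'$ through an isomorphism extracted from the two bilinear forms. Let $\tilde\Phi=(\tilde A;\tilde A^*;\{\tilde E_i\}_{i=0}^{d'};\{\tilde E_i^*\}_{i=0}^{d'})$ be a $\rho$-descendent of $\Phi$ defined on some irreducible module $\tilde V$, as provided by \eqref{existence of Phi'}, and let $\widetilde{(\cdot|\cdot)}:V\times\tilde V\to\mathbb{K}$ denote the associated balanced bilinear form (unique up to scalar by \eqref{characterization of bilinear form in terms of parameter arrays}).

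First, I would show that the left kernels of the two bilinear forms coincide, both equalling $K:=\sum_{i\notin[\rho,\rho+d']}E_i^*V$. For $(\cdot|\cdot)$, condition (i) places $K$ inside the kernel while (iii) forces the kernel to have codimension $d'+1$; for $\widetilde{(\cdot|\cdot)}$ the same conclusion comes from (B1) and \eqref{values of products}. Dualizing, both forms identify $V/K$ with a common $(d'+1)$-dimensional quotient, yielding a canonical $\mathbb{K}$-linear isomorphism $\psi:\tilde V\to V'$ characterized by $(v|\psi(\tilde v))=\widetilde{(v|\tilde v)}$ for all $v\in V$ and $\tilde v\in\tilde V$. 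Applying condition (i) to both forms then gives $\psi(\tilde E_j^*\tilde V)=U_j^{*\prime}$ for each $j$, since both these one-dimensional subspaces are characterized as the preimage of $E_{\rho+j}^*V$ under the injective map $\mathrm{proj}':V'\hookrightarrow V$ determined by $(v|v')=\langle v,\mathrm{proj}'v'\rangle$.

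Next I would set $\Phi':=\bigl(\psi\tilde A\psi^{-1};\psi\tilde A^*\psi^{-1};\{\psi\tilde E_j\psi^{-1}\}_{j=0}^{d'};\{\psi\tilde E_j^*\psi^{-1}\}_{j=0}^{d'}\bigr)$, a Leonard system in $\mathrm{End}(V')\cong\mathscr{A}'$ with respect to which $(\cdot|\cdot)$ is by construction $\rho$-balanced and with $E_j^{*\prime}V'=U_j^{*\prime}$ by the previous step. The remaining task is to verify $\psi(\tilde E_j\tilde V)=U_j'$ for each $j$. Both subspaces lie in
\[
\mathcal S_j:=\{v'\in V':(E_iV|v')=0\text{ for all }i\notin[j,j+d-d']\},
\]
so it suffices to prove $\dim\mathcal S_j=1$. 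Writing $v'\in\mathcal S_j$ as $\sum_kv'_k$ with $v'_k\in U_k'$ and exploiting the banded orthogonality (ii), an induction on $i=0,1,\ldots,j-1$ from the ``low'' end, together with the symmetric induction from the ``high'' end, forces $v'_k=0$ for $k\ne j$, provided the corner pairings $(E_kV|U_k')$ and $(E_{k+d-d'}V|U_k')$ are nonzero for every $0\leqslant k\leqslant d'$. These nonvanishings follow from condition (iii) together with the Leonard-system nondegeneracy fact that no one-dimensional $E_kV$ lies inside $K$, a consequence of \cite[Lemma~10.2]{Terwilliger2004LAA} and its $*$-dual.

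Uniqueness up to affine transformation is then immediate: any two such $\Phi'$ are, via their respective $\psi$'s, conjugate to the reference $\tilde\Phi$, which is itself determined up to affine isomorphism by its parameter array through \eqref{characterization of bilinear form in parametric form}. The main obstacle I anticipate is the $A$-eigenspace matching $\psi(\tilde E_j\tilde V)=U_j'$: while the tight condition (i) pins down the $A^*$-eigenspaces effortlessly, the merely banded condition (ii) leaves room for ambiguity in the $A$-eigenspaces, and only the corner-pairing bookkeeping described above removes it.
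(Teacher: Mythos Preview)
Your overall architecture matches the paper's: take a reference $\rho$-descendent (the paper places it directly in $\mathscr{A}'$ and calls it $\Phi''$), compare the two injective maps $V'\hookrightarrow V$ coming from the two bilinear forms, conjugate, and then verify that the transported eigenspaces agree with the given decompositions. The $E^{*}$-matching via condition (i) is fine.

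The gap is in your justification of the corner pairings $(E_kV\,|\,U_k')\ne 0$ and $(E_{k+d-d'}V\,|\,U_k')\ne 0$. Your stated reason, ``$(\cdot|\cdot)$ has full rank and $E_kV\not\subseteq K$'', only yields the outermost cases $k=0$ and $k=d'$: for $k=0$ one has $(E_0V\,|\,U_\ell')=0$ for all $\ell>0$ by (ii), so vanishing of $(E_0V\,|\,U_0')$ would indeed force $E_0V\subseteq K$. But for $1\leqslant k\leqslant d'-1$ condition (ii) does \emph{not} kill $(E_kV\,|\,U_\ell')$ for $\ell<k$, so $(E_kV\,|\,U_k')=0$ does not place $E_kV$ in the left kernel, and the argument collapses. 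Equivalently, what you would need is $(E_0V+\dots+E_kV)\cap K=0$ for every $k\leqslant d'-1$, which is a genuine structural fact about $\Phi$ that does not follow from $E_\ell V\not\subseteq K$ for individual $\ell$. The paper avoids this difficulty by running the triangular induction with respect to the \emph{reference} decomposition $\{\mathrm{proj}''E_i''V'\}$ (in your notation, $\{\psi(\tilde E_i\tilde V)\}$) rather than the unknown $\{U_i'\}$: for the reference one can prove the corner nonvanishings directly, since $(E_iV\,|\,E_i''V')_0=(E_0V\,|\,V')_0\ne 0$ via \eqref{projE0V=E0'V'} and the companion computation in \eqref{bilinear form is balanced}(i), and the other corner follows by passing to $\Phi^{\Downarrow},\Phi''^{\Downarrow}$ using \eqref{balanced with respect to Leonard pairs}. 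Once $\dim\mathcal S_j=1$ is established this way, $U_j'=\mathcal S_j=\psi(\tilde E_j\tilde V)$ falls out. A smaller point: your uniqueness paragraph is circuitous; since any admissible $\Phi'$ must have $E_i'V'=U_i'$ and $E_i^{*\prime}V'=U_i^{*\prime}$, the idempotents are already pinned down and only $A',A^{*\prime}$ can vary, which is exactly an affine transformation (cf.\ \eqref{ith component of 0*}).
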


\begin{proof}
The uniqueness is easily verified, e.g. from \eqref{ith component of 0*}.
To show the existence, let $\Phi''=\bigl(A'';A^{*\prime\prime};\{E_i''\}_{i=0}^{d'};\{E_i^{*\prime\prime}\}_{i=0}^{d'}\bigr)$ be a $\rho$-descendent of $\Phi$ in $\mathscr{A}'$ and let $(\cdot|\cdot)_0: V\times V'\rightarrow\mathbb{K}$ be a bilinear form which is $\rho$-balanced with respect to $\Phi$, $\Phi''$.
Let $\mathrm{proj}':V'\rightarrow V$ (resp. $\mathrm{proj}'':V'\rightarrow V$) be a unique linear map satisfying $(v|v')=\langle v,\mathrm{proj}'v'\rangle$ (resp. $(v|v')_0=\langle v,\mathrm{proj}''v'\rangle$) for all $v\in V$, $v'\in V'$.
Since $\mathrm{proj}'$, $\mathrm{proj}''$ are injective (cf. \eqref{values of products}) we have
\begin{equation*}
	\mathrm{proj}' V'=\mathrm{proj}'' V'=E_{\rho}^*V+\dots+E_{\rho+d'}^*V,
\end{equation*}
from which it follows that there is a unique invertible element $C'$ in $\mathscr{A}'$ such that $\mathrm{proj}'C'v'=\mathrm{proj}''v'$ for all $v'\in V'$.
Let $\gamma:\mathscr{A}'\rightarrow\mathscr{A}'$ be the automorphism of $\mathscr{A}'$ defined by $X^{\prime\gamma}=C'X'C^{\prime-1}$ $(X'\in\mathscr{A}')$ and define $\Phi'=\Phi^{\prime\prime\gamma}$.
It remains to show that $E_i'V'=U_i'$, $E_i^{*\prime}V'=U_i^{*\prime}$ for $0\leqslant i\leqslant d'$.
On one hand, from (i) it is clear that $\mathrm{proj}'U_i^{*\prime}=E_{\rho+i}^*V$.
On the other hand, we have
\begin{equation*}
	\mathrm{proj}'E_i^{*\prime}V'=\mathrm{proj}'C'E_i^{*\prime\prime}C^{\prime-1}V'=\mathrm{proj}''E_i^{*\prime\prime}V'=E_{\rho+i}^*V.
\end{equation*}
Hence $E_i^{*\prime}V'=U_i^{*\prime}$.
Likewise we have $\mathrm{proj}'E_i'V'=\mathrm{proj}''E_i''V'$.
Furthermore,
\begin{equation*}
	\mathrm{proj}''E_i''V',\ \mathrm{proj}'U_i'\subseteq(E_{\rho}^*V+\dots+E_{\rho+d'}^*V)\cap(E_iV+\dots+E_{i+d-d'}V).
\end{equation*}
Consequently, it is enough to prove the following:
\begin{equation*}
	\dim(E_{\rho}^*V+\dots+E_{\rho+d'}^*V)\cap(E_iV+\dots+E_{i+d-d'}V)=1 \quad (0\leqslant i\leqslant d'),
\end{equation*}
for it would imply that $\mathrm{proj}'E_i'V'=\mathrm{proj}''E_i''V'=\mathrm{proj}'U_i'$ and hence that $E_i'V'=U_i'$.
For this purpose, as in the proof of  \eqref{bilinear form is balanced}(i) we observe that
\begin{align*}
	\langle E_iV,\mathrm{proj}''E_i''V'\rangle&=(E_iV|E_i''V')' \\
	&=(E_0V+\dots+E_iV|E_i''V'+\dots+E_{d'}''V')' \\
	&=(E_0V|E_0''V'+\dots+E_{d'}''V')' \\
	&\ne 0 && \text{by \eqref{projE0V=E0'V'}}.
\end{align*}
Since $(\cdot|\cdot)_0$ is also balanced with respect to $\Phi^{\Downarrow}, \Phi^{\prime\prime\Downarrow}$ (cf. \eqref{balanced with respect to Leonard pairs}) we have
\begin{equation*}
	\langle E_{i+d-d'}V,\mathrm{proj}''E_i''V'\rangle=\langle E_{d'-i}(\Phi^{\Downarrow})V,\mathrm{proj}''E_{d'-i}(\Phi^{\prime\prime\Downarrow})V'\rangle\ne 0.
\end{equation*}
Now let $v_i$ be a nonzero vector in $\mathrm{proj}''E_i''V'$, for $0\leqslant i\leqslant d'$.
Then $\{v_i\}_{i=0}^{d'}$ is a basis of $E_{\rho}^*V+\dots+E_{\rho+d'}^*V$.
Let $v\in(E_{\rho}^*V+\dots+E_{\rho+d'}^*V)\cap(E_iV+\dots+E_{i+d-d'}V)$.
Then $v$ is a linear combination of  $v_0,v_1,\dots,v_{d'}$, but no $v_j$ with $0\leqslant j<i$ can be involved since $v_j$ is not orthogonal to $E_jV$.
Likewise no $v_j$ with $i<j\leqslant d'$ can be involved since $v_j$ is not orthogonal to $E_{j+d-d'}V$.
It follows that $v$ is a scalar multiple of $v_i$, and therefore the proof is complete.
\end{proof}

\section{Remarks: balanced bilinear forms and polynomials from the Askey scheme}\label{sec: polynomials}

In this section, we return to the situation of \eqref{Phi, Phi'}.
Let $x$ be an indeterminate.
For $0\leqslant i\leqslant d$ we define a polynomial $u_i\in\mathbb{K}[x]$ by
\begin{equation}
	u_i=\sum_{\ell=0}^i\frac{(\theta_i^*-\theta_0^*)\dots(\theta_i^*-\theta_{\ell-1}^*)(x-\theta_0)\dots(x-\theta_{\ell-1})}{\varphi_1\varphi_2\dots\varphi_{\ell}}.
\end{equation}
The $u_i$ belong to the terminating branch of the Askey scheme \cite{KS1998R}, consisting of the $q$-Racah, $q$-Hahn, dual $q$-Hahn, $q$-Krawtchouk, dual $q$-Krawtchouk, quantum $q$-Krawtchouk, affine $q$-Krawtchouk, Racah, Hahn, dual Hahn, Krawtchouk, Bannai/Ito and orphan polynomials.
See \cite[Examples 5.3--5.15]{Terwilliger2005DCC}.

Let $u,u',v,v'$ be nonzero vectors in $E_0V,E_0'V',E_0^*V,E_0^{*\prime}V'$, respectively.
It is known \cite[Theorem 15.8]{Terwilliger2004LAA} that
\begin{equation}\label{Eiv in Ej*u}
	E_iv=k_i^*\frac{\langle u,v\rangle}{||u||^2}\sum_{j=0}^du_i^*(\theta_j^*)E_j^*u, \quad E_i'v'=k_i^{*\prime}\frac{\langle u',v'\rangle'}{||u'||^{\prime 2}}\sum_{j=0}^{d'}u_i^{*\prime}(\theta_j^{*\prime})E_j^{*\prime}u'.
\end{equation}
Suppose now that $\Phi'$ is a $\rho$-descendent of $\Phi$ and let $(\cdot|\cdot): V\times V'\rightarrow\mathbb{K}$ be a corresponding balanced bilinear form.
Then from \eqref{values of products} it follows that
\begin{align*}
	(E_iv|E_j'v')&=k_i^*k_j^{*\prime}\frac{\langle u,v\rangle}{||u||^2}\frac{\langle u',v'\rangle'}{||u'||^{\prime 2}}\sum_{\ell=0}^{d'}u_i^*(\theta_{\rho+\ell}^*)u_j^{*\prime}(\theta_{\ell}^{*\prime})(E_{\rho+\ell}^*u|E_{\ell}^{*\prime}u') \\
	&=\frac{\epsilon k_i^*k_j^{*\prime}}{\nu'}\frac{\langle u,v\rangle\langle u',v'\rangle'}{||u||^2}\sum_{\ell=0}^{d'}u_i^*(\theta_{\rho+\ell}^*)u_j^{*\prime}(\theta_{\ell}^{*\prime})k_{\ell}'.
\end{align*}
Consequently,
\begin{equation}\label{orthogonality}
	\sum_{\ell=0}^{d'}u_i^*(\theta_{\rho+\ell}^*)u_j^{*\prime}(\theta_{\ell}^{*\prime})k_{\ell}'=0 \quad\text{\emph{if}}\ i<j\ \text{\emph{or}}\ i>j+d-d'.
\end{equation}
Conversely, it is easy to show that the orthogonality \eqref{orthogonality} in turn characterizes the existence of a balanced bilinear form.
We may remark that \eqref{orthogonality} was previously observed by Hosoya and Suzuki \cite[Proposition 1.3]{HS2007EJC} for Leonard systems arising from certain pairs of $Q$-polynomial distance-regular graphs.

We illustrate \eqref{orthogonality} with an example.
With the notation of \eqref{list of parameter arrays}, assume that
\begin{equation*}
	p(\Phi)=p(\mathrm{IIC};r,s,s^*,\theta_0,\theta_0^*,d), \quad p(\Phi')=p(\mathrm{IIC};r,s,s^*,\theta_0',\theta_0^{*\prime},d').
\end{equation*}
It follows that the $u_i^*$, $u_i^{*\prime}$ are Krawtchouk polynomials:
\begin{equation*}
	u_i^*(\theta_j^*)=\hypergeometricseries{2}{1}{-i,-j}{-d}{\frac{1}{p}}, \quad u_i^{*\prime}(\theta_j^{*\prime})=\hypergeometricseries{2}{1}{-i,-j}{-d'}{\frac{1}{p}}
\end{equation*}
where $p=r/ss^*$ \cite[Example 5.13]{Terwilliger2005DCC}.
By \eqref{characterization of bilinear form in parametric form}, $\Phi'$ is a $\rho$-descendent of $\Phi$ for all $0\leqslant\rho\leqslant d-d'$.
Using \cite[Theorem 17.11]{Terwilliger2004LAA} we obtain
\begin{equation*}
	\sum_{\ell=0}^{d'}\binom{d'}{\ell}\!\!\left(\frac{p}{1-p}\right)^{\ell}\hypergeometricseries{2}{1}{-i,-\rho-\ell}{-d}{\frac{1}{p}}\hypergeometricseries{2}{1}{-j,-\ell}{-d'}{\frac{1}{p}}=0
\end{equation*}
whenever $i<j$ or $i>j+d-d'$.
This orthogonality for Krawtchouk polynomials can also be easily derived from \cite[Proposition 2.1]{CS1990JAT}.
However, it should be remarked that \eqref{characterization of bilinear form in parametric form} gives a complete classification of those polynomials satisfying \eqref{orthogonality} within the terminating branch of the Askey scheme.

\appendix

\section{The list of parameter arrays}

In this appendix we display the parameter arrays of the Leonard systems.
The data in \eqref{list of parameter arrays} is taken from \cite{Terwilliger2005DCC}, but for future use (cf. \cite{Tanaka2008pre}) the presentation is changed so as to be consistent with the notation in \cite{BI1984B,Terwilliger1992JAC,Terwilliger1993JACa,Terwilliger1993JACb}.
In \eqref{list of parameter arrays} the following implicit assumptions apply:
the scalars $\theta_i, \theta_i^*, \varphi_i,\phi_i$ are contained in $\mathbb{K}$, and the scalars $q,h,h^*,\dots$ are contained in the algebraic closure of $\mathbb{K}$.

\begin{thm}[{\cite[Theorem 5.16]{Terwilliger2005DCC}}]\label{list of parameter arrays}
Let $\Phi$ be the Leonard system from \eqref{Leonard system} and let $p(\Phi)=\left(\{\theta_i\}_{i=0}^d;\{\theta_i^*\}_{i=0}^d;\{\varphi_i\}_{i=1}^d;\{\phi_i\}_{i=1}^d\right)$ be the parameter array of $\Phi$.
Then at least one of the following cases I, IA, II, IIA, IIB, IIC, III, IV hold:
\begin{description}
\item[\textup{(I)}] $p(\Phi)=p(\mathrm{I};q,h,h^*,r_1,r_2,s,s^*,\theta_0,\theta_0^*,d)$ where $r_1r_2=ss^*q^{d+1}$,
\begin{align*}
	\theta_i&=\theta_0+h(1-q^i)(1-sq^{i+1})q^{-i}, \\
	\theta_i^*&=\theta_0^*+h^*(1-q^i)(1-s^*q^{i+1})q^{-i}
\end{align*}
for $0\leqslant i\leqslant d$, and
\begin{align*}
	\varphi_i&=hh^*q^{1-2i}(1-q^i)(1-q^{i-d-1})(1-r_1q^i)(1-r_2q^i), \\
	\phi_i&=\begin{cases} hh^*q^{1-2i}(1-q^i)(1-q^{i-d-1})(r_1-s^*q^i)(r_2-s^*q^i)/s^* & \text{if} \ s^*\ne 0, \\ hh^*q^{d+2-2i}(1-q^i)(1-q^{i-d-1})(s-r_1q^{i-d-1}-r_2q^{i-d-1}) & \text{if} \ s^*=0 \end{cases}
\end{align*}
for $1\leqslant i\leqslant d$.
\item[\textup{(IA)}] $p(\Phi)=p(\mathrm{IA};q,h^*,r,s,\theta_0,\theta_0^*,d)$ where
\begin{align*}
	\theta_i&=\theta_0-sq(1-q^i), \\
	\theta_i^*&=\theta_0^*+h^*(1-q^i)q^{-i}
\end{align*}
for $0\leqslant i\leqslant d$, and
\begin{align*}
	\varphi_i&=-rh^*q^{1-i}(1-q^i)(1-q^{i-d-1}), \\
	\phi_i&=h^*q^{d+2-2i}(1-q^i)(1-q^{i-d-1})(s-rq^{i-d-1})
\end{align*}
for $1\leqslant i\leqslant d$.
\item[\textup{(II)}] $p(\Phi)=p(\mathrm{II};h,h^*,r_1,r_2,s,s^*,\theta_0,\theta_0^*,d)$ where $r_1+r_2=s+s^*+d+1$,
\begin{align*}
	\theta_i&=\theta_0+hi(i+1+s), \\
	\theta_i^*&=\theta_0^*+h^*i(i+1+s^*)
\end{align*}
for $0\leqslant i\leqslant d$, and
\begin{align*}
	\varphi_i&=hh^*i(i-d-1)(i+r_1)(i+r_2), \\
	\phi_i&=hh^*i(i-d-1)(i+s^*-r_1)(i+s^*-r_2)
\end{align*}
for $1\leqslant i\leqslant d$.
\item[\textup{(IIA)}] $p(\Phi)=p(\mathrm{IIA};h,r,s,s^*,\theta_0,\theta_0^*,d)$ where
\begin{align*}
	\theta_i&=\theta_0+hi(i+1+s), \\
	\theta_i^*&=\theta_0^*+s^*i
\end{align*}
for $0\leqslant i\leqslant d$, and
\begin{align*}
	\varphi_i&=hs^*i(i-d-1)(i+r), \\
	\phi_i&=hs^*i(i-d-1)(i+r-s-d-1)
\end{align*}
for $1\leqslant i\leqslant d$.
\item[\textup{(IIB)}] $p(\Phi)=p(\mathrm{IIB};h^*,r,s,s^*,\theta_0,\theta_0^*,d)$ where
\begin{align*}
	\theta_i&=\theta_0+si, \\
	\theta_i^*&=\theta_0^*+h^*i(i+1+s^*)
\end{align*}
for $0\leqslant i\leqslant d$, and
\begin{align*}
	\varphi_i&=h^*si(i-d-1)(i+r), \\
	\phi_i&=-h^*si(i-d-1)(i+s^*-r)
\end{align*}
for $1\leqslant i\leqslant d$.
\item[\textup{(IIC)}] $p(\Phi)=p(\mathrm{IIC};r,s,s^*,\theta_0,\theta_0^*,d)$ where
\begin{align*}
	\theta_i&=\theta_0+si, \\
	\theta_i^*&=\theta_0^*+s^*i
\end{align*}
for $0\leqslant i\leqslant d$, and
\begin{align*}
	\varphi_i&=ri(i-d-1), \\
	\phi_i&=(r-ss^*)i(i-d-1)
\end{align*}
for $1\leqslant i\leqslant d$.
\item[\textup{(III)}] $p(\Phi)=p(\mathrm{III};h,h^*,r_1,r_2,s,s^*,\theta_0,\theta_0^*,d)$ where $r_1+r_2=-s-s^*+d+1$,
\begin{align*}
	\theta_i&=\theta_0+h(s-1+(1-s+2i)(-1)^i), \\
	\theta_i^*&=\theta_0^*+h^*(s^*-1+(1-s^*+2i)(-1)^i)
\end{align*}
for $0\leqslant i\leqslant d$, and
\begin{align*}
	\varphi_i&=\begin{cases} -4hh^*i(i+r_1) & \text{if $i$ even, $d$ even}, \\ -4hh^*(i-d-1)(i+r_2) & \text{if $i$ odd, $d$ even}, \\ -4hh^*i(i-d-1) & \text{if $i$ even, $d$ odd}, \\ -4hh^*(i+r_1)(i+r_2) & \text{if $i$ odd, $d$ odd}, \end{cases} \\
	\phi_i&=\begin{cases} 4hh^*i(i-s^*-r_1) & \text{if $i$ even, $d$ even}, \\ 4hh^*(i-d-1)(i-s^*-r_2) & \text{if $i$ odd, $d$ even}, \\ -4hh^*i(i-d-1) & \text{if $i$ even, $d$ odd}, \\ -4hh^*(i-s^*-r_1)(i-s^*-r_2) & \text{if $i$ odd, $d$ odd} \end{cases}
\end{align*}
for $1\leqslant i\leqslant d$.
\item[\textup{(IV)}] $p(\Phi)=p(\mathrm{IV};h,h^*,r,s,s^*,\theta_0,\theta_0^*)$ where $\mathrm{char}(\mathbb{K})=2$, $d=3$, and
\begin{align*}
	\theta_1&=\theta_0+h(s+1), & \theta_2&=\theta_0+h, & \theta_3&=\theta_0+hs, \\
	\theta_1^*&=\theta_0^*+h^*(s^*+1), & \theta_2^*&=\theta_0^*+h^*, & \theta_3^*&=\theta_0^*+h^*s^*, \\
	\varphi_1&=hh^*r, & \varphi_2&=hh^*, & \varphi_3&=hh^*(r+s+s^*), \\
	\phi_1&=hh^*(r+s(1+s^*)), & \phi_2&=hh^*, & \phi_3&=hh^*(r+s^*(1+s)).
\end{align*}
\end{description}
\end{thm}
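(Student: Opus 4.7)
The plan is to reduce the problem to a linear recurrence and solve it over the algebraic closure of $\mathbb{K}$. By \eqref{PA5}, let $\beta+1$ be the common value of the ratios $(\theta_{i-2}-\theta_{i+1})/(\theta_{i-1}-\theta_i)$ (or of $\ast$ versions), defined for $2\leqslant i\leqslant d-1$; when $d<3$ we may take $\beta$ arbitrary. Rewriting this as $\theta_{i+1}-\theta_i=\beta(\theta_i-\theta_{i-1})-(\theta_{i-1}-\theta_{i-2})$ shows that the first differences $\delta_i:=\theta_i-\theta_{i-1}$ satisfy the three-term recurrence $\delta_{i+1}=\beta\delta_i-\delta_{i-1}$, and the same recurrence holds for $\delta_i^{\ast}:=\theta_i^{\ast}-\theta_{i-1}^{\ast}$. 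Both sequences have the \emph{same} characteristic polynomial $t^2-\beta t+1$.

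Next, I would solve this recurrence in the algebraic closure of $\mathbb{K}$ by writing $\beta=q+q^{-1}$. If $q^2\neq 1$ and $\mathrm{char}(\mathbb{K})\neq 2$, the general solution has the form $\theta_i=\alpha_0+\alpha_1 q^i+\alpha_2 q^{-i}$, giving Case I when both $\alpha_1,\alpha_2\neq 0$ and their $\ast$-analogues are nonzero, and Case IA upon degeneracy. If $\beta=2$ (double root $1$), the solution is polynomial of degree $\leqslant 2$ in $i$; the four sub-possibilities for quadratic/linear behaviour on each of $\{\theta_i\}$ and $\{\theta_i^\ast\}$ yield Cases II, IIA, IIB, IIC. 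If $\beta=-2$ (double root $-1$, $\mathrm{char}(\mathbb{K})\neq 2$), a similar analysis forces alternating-sign behaviour producing Case III, with the parity of $d$ entering essentially. Finally the characteristic $2$ residue (and very small $d$) leaves a single sporadic family, which a direct finite check shows must be Case IV with $d=3$. At each branching, \eqref{PA2} forbids collisions and hence restricts the parameters to the ranges indicated.

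Once the two eigenvalue sequences are parameterized in the announced form, the scalars $\varphi_i$ and $\phi_i$ are determined by substituting into \eqref{PA3} and \eqref{PA4}: these sums evaluate in closed form via \eqref{vartheta} (for the $\vartheta_i$) combined with the explicit $\theta_i,\theta_i^{\ast}$, and after renaming free constants as $h,h^{\ast},r_1,r_2,s,s^{\ast}$ one recovers the formulas of Cases I through IV. Conversely, for every entry of the list one checks \eqref{PA1}--\eqref{PA5} directly, so the list exhausts the set of parameter arrays up to the stated parameter constraints (e.g.\ $r_1r_2=ss^{\ast}q^{d+1}$ in Case I, $r_1+r_2=s+s^{\ast}+d+1$ in Case II, $r_1+r_2=-s-s^{\ast}+d+1$ in Case III).

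The main technical obstacle is the bookkeeping of degenerate subcases: distinguishing Case I from IA, and separating II/IIA/IIB/IIC, amounts to tracking which of the leading coefficients in the closed-form solutions vanish, and ensuring that \eqref{PA1} and \eqref{PA2} still hold after the degeneration. The parity-dependent analysis of Case III (splitting by $d$ even/odd, and $i$ even/odd inside $\varphi_i,\phi_i$) is delicate, and the isolated characteristic $2$ Case IV requires an \emph{ad hoc} argument since the recurrence degenerates. In view of \cite[Theorem 5.16]{Terwilliger2005DCC} the full verification is entirely standard, so my presentation would follow that reference, merely re-indexing parameters to match the notation used in \cite{BI1984B,Terwilliger1992JAC,Terwilliger1993JACa,Terwilliger1993JACb} as announced.
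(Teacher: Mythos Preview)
The paper does not give its own proof of this statement: Theorem~\eqref{list of parameter arrays} is quoted verbatim from \cite[Theorem 5.16]{Terwilliger2005DCC} and placed in an appendix purely for reference, with the presentation re-indexed to match \cite{BI1984B,Terwilliger1992JAC,Terwilliger1993JACa,Terwilliger1993JACb}. There is therefore nothing to compare against in the paper itself.

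Your sketch is the standard argument, essentially the one carried out in \cite{Terwilliger2005DCC} (and earlier in \cite{Terwilliger2001LAA}): extract from \eqref{PA5} a second-order linear recurrence for the eigenvalue sequences, solve it over the algebraic closure via the characteristic polynomial $t^{2}-\beta t+1$, branch on whether the roots are distinct ($q^{2}\ne 1$), coincide at $1$ ($\beta=2$), coincide at $-1$ ($\beta=-2$), or degenerate in characteristic $2$, and then read off $\varphi_i,\phi_i$ from \eqref{PA3}--\eqref{PA4}. One small imprecision: the hypothesis ``$q^{2}\ne 1$ and $\mathrm{char}(\mathbb{K})\ne 2$'' for Case~I is slightly off---Cases I and IA occur whenever $q^{2}\ne 1$ regardless of characteristic; the characteristic enters only in separating the repeated-root cases ($\beta=\pm 2$) and isolating the orphan Case~IV. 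Apart from that, your outline matches the cited proof, and since you explicitly defer the bookkeeping to \cite{Terwilliger2005DCC}, there is no gap.
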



\medskip
\begin{ack}
The author would like to thank Tatsuro Ito and Paul Terwilliger for a lot of fruitful discussions and comments concerning the paper.
Thanks are also due to the anonymous referee who provided helpful suggestions.
Part of this work was done while the author was visiting the Department of Computational Science at Kanazawa University in the Fall of 2007.
\end{ack}


\end{document}